\newcommand{\R}{\mathbb{R}}
\renewcommand{\d}{\partial}
\newcommand{\p}{\partial}
\newcommand{\gl}{\Lambda}
\newcommand{\gk}[1]{\gl_{#1}}
\newcommand{\gnot}{\gl_{0}}
\newcommand{\be}{\begin{equation}}
	\newcommand{\ee}{\end{equation}}
\newcommand{\ba}{\begin{aligned}}
	\newcommand{\ea}{\end{aligned}}
\newcommand{\N}{\mathbb{N}}
\newcommand{\fu}{\mathbf{u}}
\newcommand{\eps}{\varepsilon}
\newcommand{\Om}{\Omega}
\newcommand{\dd}{\:\mathrm{d}}
\newcommand{\Z}{\mathbb{Z}}
\newcommand{\cH}{\mathcal H}
\newcommand{\bu}{\bar{u}}
\newcommand{\reg}{\mathrm{reg}}
\newcommand{\sing}{\mathrm{sing}}
\newcommand{\ureg}{u_\mathrm{reg}}
\newcommand{\busing}{\bu_{\sing}}
\newcommand{\bfi}{\overline{f_i}}
\newcommand{\bfzero}{\overline{f_0}}
\newcommand{\bfun}{\overline{f_1}}
\newcommand{\bfj}{\overline{f_j}}
\newcommand{\qnot}{{Q^{0}}}
\newcommand{\qhalf}{{Q^{1/2}}}
\newcommand{\qone}{{Q^1}}
\newcommand{\supp}{\operatorname{supp}}
\newcommand{\hperp}{\mathcal{H}^\perp_{\operatorname{sg}}}
\newtheorem{theorem}{Theorem}
\newtheorem{remark}{Remark}[section]
\newtheorem{definition}{Definition}[section]
\newtheorem{lemma}{Lemma}[section]
\newtheorem{corollary}{Corollary}[section]
\newtheorem{proposition}{Proposition}[section]
\begin{document}
	
	%
	%
	\title{Nonlinear forward-backward problems}
	
	%
	%
\author{Anne-Laure Dalibard\thanks{Sorbonne Université, Université Paris Cité, CNRS, Laboratoire Jacques-Louis Lions, LJLL, F-75005 Paris, France, \texttt{anne-laure.dalibard@sorbonne-universite.fr}},
	Frédéric Marbach\thanks{ CNRS, École Normale Supérieure, Université PSL, Département de Mathématiques et applications, \texttt{frederic.marbach@ens.fr}},
	 Jean Rax\thanks{Sorbonne Université, Université Paris Cité, CNRS, Laboratoire Jacques-Louis Lions, LJLL, F-75005 Paris, France, \texttt{jean.rax@normalesup.org}}
	}

	%
	%
	\maketitle
	
	\begin{abstract}
	We prove the existence and uniqueness of strong solutions
	to the equation $u u_x - u_{yy} = f$ in the vicinity of the linear shear flow, subject to perturbations of the source term and lateral boundary conditions.
	Since the solutions we consider have opposite signs in the lower and upper half of the domain, this is a quasilinear forward-backward parabolic problem, which changes type across a critical curved line within the domain.
	In particular, lateral boundary conditions can be imposed only where the characteristics are inwards.
	There are several difficulties associated with this problem. First, the forward-backward geometry depends on the solution itself. This requires to be quite careful with the approximation procedure used to construct solutions.
	Second, and  more importantly, the linearized equations solved at each step of the iterative scheme admit a finite number of singular solutions, of which we provide an explicit construction.  
	This is similar to well-known phenomena in elliptic problems in nonsmooth domains.
	Hence, the solutions to the equation are regular if and only if the source terms satisfy a finite number of orthogonality conditions.
	A key difficulty of this work is to cope with these orthogonality conditions during the nonlinear fixed-point scheme.
	In particular, we are led to prove their stability with respect to the underlying base flow.
	To tackle this deceivingly simple problem, we develop a methodology which we believe to be both quite natural and adaptable to other situations in which one wishes to prove the existence of regular solutions to a nonlinear problem for suitable data despite the existence of singular solutions at the linear level.
	This paper is a shorter version of \cite{DMR}.

	\end{abstract}

\section{Introduction}
\label{sec:intro}

This paper is concerned with sign-changing solutions of 
the equation\begin{equation} \label{eq:eq0}
	u \d_x u - \d_{yy} u = f
\end{equation}
in the rectangular domain $\Omega := (x_0, x_1) \times (-1,1)$, where $f$ is an external source term.
A natural solution to \eqref{eq:eq0} with a null source term $\mathbf f= 0$ is the linear shear flow $\mathbf u(x,y) := y$, which changes sign across the horizontal line $\{ y = 0 \}$.
We are interested in strong solutions to~\eqref{eq:eq0} which are close (with respect to an appropriate norm) to this linear shear flow $\fu$.
Our purpose is to construct such solutions by perturbing the lateral boundary data at $x=x_0$ or $x=x_1$ or the source term $\mathbf{f} = 0$.

\begin{figure}[ht]
	\centering
	\includegraphics{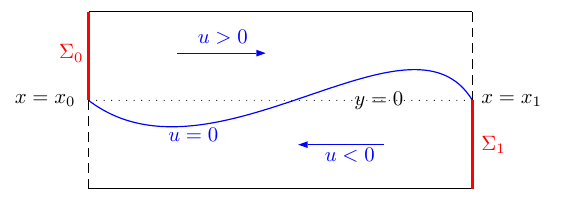}
	\caption{Fluid domain $\Omega$ and inflow boundaries $\Sigma_0 \cup \Sigma_1$}
	\label{fig:omega}
\end{figure}

Since such solutions will change sign across a line $\{ u = 0 \}$ lying within $\Omega$, a key feature of this work is that \eqref{eq:eq0} must be seen as a quasilinear forward-backward parabolic problem in the horizontal direction.
Thus, to ensure the existence of a solution, one must be particularly careful as to how one enforces the lateral perturbations.
More precisely, the problem is forward parabolic in the  domain above the line $\{u=0\}$, in which $u>0$, and therefore we shall  prescribe a boundary condition on $\Sigma_0 := \{x=x_0\}\cap\{u>0\}$; and backward parabolic in the domain below the line $\{u=0\}$, and we shall prescribe a boundary condition on $\Sigma_1 := \{x=x_1\}\cap \{u<0\}$.

Our purpose is to construct strong solutions of this system, in a high regularity functional space. However, one key difficulty of our work lies in the fact that, even when the source term $f$ is smooth, say in $C^\infty_0(\Om)$, \emph{solutions to \eqref{eq:eq0} have singularities in general}.
Actually, this feature is already present at the linear level, i.e.\ for the equation $y\d_x u -\d_{xx}u=f$.
For this linear equation, we prove that if $f$ is smooth, the associated weak solution to the linear system inherits the regularity of $f$ if and only if $f$ satisfies \emph{orthogonality conditions} (i.e.\ the scalar product of $f$ with some identified profiles must vanish). We also describe the singularities that appear when these orthogonality conditions are not satisfied.
In the nonlinear setting, the equivalent of these orthogonality conditions is the following (informal) statement: when $f$ is smooth and satisfies a  smallness assumption in a suitable functional space, strong solutions exist if and only if $f$ belongs to a finite codimensional space.

Due to the forward-backward nature of the problem, we must choose the lateral perturbations and the source term in a particular product space.
We therefore introduce the vector space
\begin{equation} \label{eq:E}
	\begin{split}
		\Big\{ (f,\delta_0,\delta_1) & \in
		C^\infty_c(\Omega)\times C^\infty([0,1]) \times C^\infty([-1,0])  ; \quad 
		\delta_i(0) = \p_y \delta_i(0) = \p_y^2 \delta_i(0) = 0 \\
		& \text{and } \delta_i((-1)^i) = \p_y^2 \delta_i((-1)^i) = 0
		\text{ for } i = 0, 1
		\Big\}
	\end{split}
\end{equation}
and $\mathcal{H}$, the Hilbert space defined as its completion with respect to the following norm, associated with the corresponding canonical scalar product
\begin{equation} \label{eq:H-norm}
	\| (f,\delta_0,\delta_1) \|_{\mathcal{H}}
	:= \|  f \|_{H^1_xH^1_y}  + \| \p_y^3 f \|_{L^2}
	+ \sum_{i \in \{0,1\}} \|\delta_i\|_{H^5} + \|(\p_y^2 \delta_i) / y \|_{H^1(|y| \dd y)}.
\end{equation}
For all $\eta>0$, we denote by $B_\eta$ the open ball of radius $\eta$ in $\cH$.

We establish the existence and uniqueness of solutions in the following anisotropic Sobolev space
\begin{equation}\label{def:Q1}
	\qone := L^2((x_0,x_1); H^5(-1,1)) \cap H^{5/3}((x_0,x_1);L^2(-1,1)).
\end{equation}
We first state a  result concerning the well-posedness in $\qone$ of the linear version of \eqref{eq:eq0} around the linear shear flow, \emph{up to two orthogonality conditions}:

\begin{theorem}[Orthogonality conditions for linear forward-backward parabolic equations] \label{theorem:existence-shear-X1}
	Let $\Sigma_0 := \{ x_0 \} \times (0,1)$ and $\Sigma_1 := \{ x_1 \} \times (-1, 0)$.
	There exists a vector subspace $\hperp \subset \mathcal{H}$ of codimension two such that, for each $(f,\delta_0, \delta_1) \in \mathcal{H}$, there exists a solution $u \in \qone$ to the problem
	\begin{equation}
		\label{eq:shear}
		\begin{cases}
			y \p_x u - \p_{yy} u = f,\\
			u_{\rvert \Sigma_i} = \delta_i, \\
			u_{\rvert y = \pm 1} = 0,
		\end{cases}
	\end{equation}
	if and only if $(f,\delta_0,\delta_1) \in \hperp$.
	Such a solution is unique and satisfies
	\begin{equation} \label{eq:u-qone}
		\|u\|_{\qone} \lesssim \| (f,\delta_0, \delta_1) \|_{\mathcal{H}}.
	\end{equation}
\end{theorem}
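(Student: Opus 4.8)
\emph{Proof strategy.} The argument has three stages: a reduction to homogeneous lateral data, the construction (and easy uniqueness) of a low-regularity weak solution, and the regularity analysis near the two degenerate corners, which is where the codimension-two obstruction is born.

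\textbf{(i) Reduction to $\delta_0=\delta_1=0$.} The compatibility conditions built into the definition of $\cH$ --- the vanishing of $\delta_i,\p_y\delta_i,\p_y^2\delta_i$ at $y=0$ and of $\delta_i,\p_y^2\delta_i$ at $y=(-1)^i$ --- are exactly what is needed to construct an explicit lift $\mathcal U=\mathcal U[\delta_0,\delta_1]\in\qone$, built from pieces localized near $\Sigma_0$ and near $\Sigma_1$ respectively, with $\mathcal U_{\rvert\Sigma_i}=\delta_i$, $\mathcal U_{\rvert y=\pm1}=0$, and $\|\mathcal U\|_{\qone}\lesssim\sum_i(\|\delta_i\|_{H^5}+\|(\p_y^2\delta_i)/y\|_{H^1(|y|\dd y)})$. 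Writing $u=\mathcal U+v$ reduces \eqref{eq:shear} to the same problem for $v$ with $\delta_0=\delta_1=0$ and source $\tilde f:=f-(y\p_x\mathcal U-\p_{yy}\mathcal U)$; the compatibility conditions are precisely what guarantees that $\tilde f$ still carries the structure measured by the $\cH$-norm, in particular $\p_y^3\tilde f\in L^2$ together with the correct vanishing on $\{y=0\}$ and $\{y=\pm1\}$. From now on $\delta_0=\delta_1=0$.

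\textbf{(ii) A weak solution, and uniqueness.} I would obtain a weak solution by elliptic regularization: for $\eps>0$ solve $-\eps\p_{xx}u_\eps+y\p_x u_\eps-\p_{yy}u_\eps=\tilde f$ in $\Om$ with homogeneous Dirichlet data on all of $\p\Om$. Testing with $u_\eps$, the transport term equals $\tfrac12\int_{-1}^1 y\,[u_\eps^2]_{x_0}^{x_1}\dd y=0$, so
\[
 \eps\|\p_x u_\eps\|_{L^2(\Om)}^2+\|\p_y u_\eps\|_{L^2(\Om)}^2=\int_\Om\tilde f\,u_\eps ,
\]
and Poincaré in $y$ gives $\|\p_y u_\eps\|_{L^2}\lesssim\|\tilde f\|_{L^2}$ uniformly in $\eps$. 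Passing to the limit $\eps\to0$ yields $u\in L^2_x H^1_{y}$ with $u_{\rvert y=\pm1}=0$ solving $y\p_x u-\p_{yy}u=\tilde f$, and a trace argument based on $\p_x(yu)=\p_{yy}u+\tilde f\in L^2_xH^{-1}_y$ shows that the \emph{inflow} traces survive ($u_{\rvert\Sigma_0}=u_{\rvert\Sigma_1}=0$) while the outflow traces do not. Uniqueness in this class is immediate from the same identity: if $\tilde f=0$ and the inflow traces vanish, then $\tfrac12\int_0^1 y\,u(x_1)^2\dd y+\tfrac12\int_{-1}^0|y|\,u(x_0)^2\dd y+\|\p_y u\|_{L^2}^2=0$, forcing $u\equiv0$; hence \eqref{eq:shear} has at most one solution in $\qone\subset L^2_xH^1_y$.

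\textbf{(iii) Regularity and the two orthogonality conditions.} Away from the critical line $\{y=0\}$, \eqref{eq:shear} is a genuine forward (for $y>0$) or backward (for $y<0$) parabolic equation in the $3{:}1$ anisotropic scaling $x\sim y^3$ natural to $y\p_x-\p_{yy}$; classical parabolic regularity, together with the hypoelliptic estimates for $y\p_x-\p_{yy}$ in the strip $\{|y|<\delta\}$ away from $\{x=x_0,x_1\}$, bootstraps $u$ to $\qone$-regularity from $\p_y^3\tilde f\in L^2$ and $\tilde f\in H^1_xH^1_y$, the corners $(x_i,\pm1)$ being absorbed by the compatibility conditions on $\delta_i$. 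The only possible loss of regularity is localized at $P_0=(x_0,0)$ and $P_1=(x_1,0)$, where an inflow boundary meets the degeneracy line. Near $P_0$ I would pass to the self-similar variables $t=x-x_0$, $\xi=y\,t^{-1/3}$ and seek homogeneous solutions $u=t^{\gamma}F(\xi)$; the profile $F$ solves a confluent hypergeometric (Airy/Kummer-type) equation on $\xi\in\R$, and imposing admissibility --- the homogeneous condition on $\Sigma_0$ as $\xi\to+\infty$, smooth matching across $\{y=0\}$, and local square-integrability (no exponential growth) on the outflow side $\xi\to-\infty$ --- singles out an increasing discrete sequence of exponents $\gamma_0<\gamma_1<\cdots$ reflecting the anisotropy. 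A profile of exponent $\gamma$ lies in $\qone$ near $P_0$ if and only if $\gamma$ exceeds the threshold imposed by $H^{5/3}$-membership in $x$, and exactly the first exponent $\gamma_0$ fails this; the same holds at $P_1$. One then shows that the weak solution of stage (ii) admits near each $P_i$ an expansion $u=c_i\,S_i+(\qone\text{ remainder})$, where $S_i$ is the $\gamma_0$-profile cut off away from $P_i$ and $c_i=c_i(\tilde f)$ is a bounded linear functional of the data; tracking the lift back, $c_i$ becomes a bounded linear functional $\ell_i$ on $\cH$. Thus $u\in\qone$ if and only if $\ell_0(f,\delta_0,\delta_1)=\ell_1(f,\delta_0,\delta_1)=0$, and since $\ell_0,\ell_1$ are linearly independent, $\hperp:=\ker\ell_0\cap\ker\ell_1$ has codimension two; \eqref{eq:u-qone} follows by combining the away-from-corner bounds with the corner expansions.

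\textbf{Main obstacle.} The technical heart is stage (iii): the precise analysis of the profile ODE --- which admissible exponents occur and, crucially, that exactly \emph{one} of them at each corner lies below the $\qone$-threshold, no more and no fewer --- and the proof that the singular-profile coefficients $c_i$ in the expansion of the \emph{weak} solution extend to bounded functionals on $\cH$, which demands a sharp trace and regularity theory for the degenerate operator in weighted self-similar norms near $P_0,P_1$. By contrast the lift (i), the energy estimates and uniqueness (ii), and the parabolic/hypoelliptic regularity away from the corners are comparatively routine.
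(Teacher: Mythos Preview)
Your proposal is broadly sound and touches the same ingredients as the paper, but the way you identify the two orthogonality conditions is genuinely different.

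The paper does \emph{not} lift the boundary data and does \emph{not} derive the codimension-two space by first proving a corner expansion of the weak solution. Instead it exploits that $y\partial_x-\partial_{yy}$ commutes with $\partial_x$: one solves the differentiated problem for $w$ (the candidate for $\partial_x u$) directly in $Z^0$ using Pagani's theory, with boundary data $\Delta_i=(f(x_i,\cdot)+\partial_y^2\delta_i)/y$, and then integrates $w$ back in $x$ separately on $\{y>0\}$ and $\{y<0\}$. The reconstructed function is a solution of \eqref{eq:shear} precisely when its value and its $y$-derivative have no jump across $\{y=0\}$, and these two jump conditions are the linear forms $\overline{\ell^0},\overline{\ell^1}$. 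Their boundedness on $\cH$ is then almost free, because $w\in Z^0\hookrightarrow\qnot$ already controls the traces $w|_{y=0}$ and $\partial_y w|_{y=0}$. The explicit singular profiles (your $S_i$, the paper's $\busing^i$) are constructed afterwards, via the same confluent-hypergeometric ODE you describe, and are used to show that $\overline{\ell^0},\overline{\ell^1}$ are independent and to obtain the decomposition of Corollary~\ref{coro:decomposition-profile}, rather than the other way around.

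Your route --- expand the weak solution near each corner and read off $c_i$ --- is the Kondrat'ev-style approach and is perfectly viable, but it front-loads exactly the step you flag as the main obstacle: proving that the coefficient maps $c_i$ are bounded on $\cH$ requires a trace/asymptotic theory near $P_i$ that the paper sidesteps entirely. The paper's differentiation-and-jump trick buys the boundedness of $\ell^j$ essentially for free from Pagani's $Z^0$ estimate, and the singular-profile analysis only needs to show that the $\busing^i$ are not in $H^1_xH^1_y$ (so the matrix $\overline{M}$ is invertible). Your approach has the advantage of being closer in spirit to elliptic corner theory and perhaps more portable; the paper's is shorter and more self-contained for this specific operator.
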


We then turn towards the nonlinear problem \eqref{eq:eq0}.
The main result of this paper is the following nonlinear generalization of Theorem  \ref{theorem:existence-shear-X1} for small enough perturbations.

\begin{theorem}[Existence and uniqueness of strong solutions to \eqref{eq:eq0} under orthogonality conditions] \label{theorem:main}
There exists $\eta>0$ and 	a Lipschitz submanifold $\mathcal{M}$ of $\mathcal{H}$ of codimension two, containing $0$ and included in $B_\eta$, such that, for every $(f,\delta_0, \delta_1) \in \mathcal{M}$, there exists a strong solution $u \in \qone$ to
	\begin{equation} \label{eq:yuuxuyy}
		\begin{cases}
			(y + u) \p_x u - \p_{yy} u = f,\\
			u_{\rvert \Sigma_i} = \delta_i, \\
			u_{\rvert y = \pm 1} = 0.
		\end{cases}
	\end{equation}
	Such solutions are unique in a small neighborhood of~$0$ in $\qone$ and satisfy the estimate~\eqref{eq:u-qone}.

	Conversely,    there exists $\eta > 0$ such that for all
	 $(f,\delta_0,\delta_1) \in B_\eta$, if there exists a solution 
	 $u \in \qone$  to \eqref{eq:yuuxuyy} such that $\|u\|_{\qone} \leq \eta$,
	then $(f,\delta_0, \delta_1) \in \mathcal{M}$.

\end{theorem}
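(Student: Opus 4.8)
The plan is to set this up as a Lyapunov--Schmidt reduction sitting on top of Theorem~\ref{theorem:existence-shear-X1}, or equivalently as a fixed point in $\qone$ in which the two obstructions are absorbed into a two-parameter correction of the data. Write $d := (f,\delta_0,\delta_1)$ and let $\Psi(u,d)$ be the residual of~\eqref{eq:yuuxuyy}, so that solving~\eqref{eq:yuuxuyy} amounts to solving $\Psi(u,d)=0$ for $u\in\qone$ small. The partial linearization $D_u\Psi(0,0)$ is exactly the shear-flow operator $v\mapsto(y\p_x v-\p_{yy}v,\,v_{\rvert\Sigma_i},\,v_{\rvert y=\pm1})$, which by Theorem~\ref{theorem:existence-shear-X1} is injective and has a closed range of codimension two: restricted to $\cH$ its range is precisely $\hperp$, and it admits a bounded right inverse $\mathcal{S}\colon\hperp\to\qone$ obeying~\eqref{eq:u-qone}. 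Fix once and for all a projector $\Pi$ onto this range, and two continuous linear functionals $\ell_1,\ell_2$ on $\cH$ whose common kernel is $\hperp$, together with $g_1,g_2\in\cH$ with $\ell_i(g_j)=\delta_{ij}$, spanning a complement of $\hperp$ in $\cH$.

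Because the problem is genuinely quasilinear, I would run a frozen-coefficient iteration rather than invoke a smooth implicit function theorem: set $u_0:=0$ and let $u_{n+1}$ solve the \emph{linear} equation $(y+u_n)\p_x u_{n+1}-\p_{yy}u_{n+1}=f_n$, $u_{n+1\rvert\Sigma_i}=\delta_{i,n}$, $u_{n+1\rvert y=\pm1}=0$, where the corrected data $(f_n,\delta_{0,n},\delta_{1,n})=d-c_n$ is the original $d$ adjusted by an element $c_n=c_{n,1}g_1+c_{n,2}g_2$ of the two-dimensional complement, chosen so that the corrected data lands in the obstruction space $\hperp(u_n)$ of the operator $(y+u_n)\p_x-\p_{yy}$ (this is where the perturbed version of Theorem~\ref{theorem:existence-shear-X1} for base flows near the shear flow is used; the $c_n$ are determined by the vanishing of the two functionals $\ell_j^{u_n}$ cutting out $\hperp(u_n)$, hence by $u_n$ and $d$). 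Uniform-in-$n$ bounds from the estimate~\eqref{eq:u-qone} and contraction estimates for the quasilinear map $w\mapsto w\p_x\,\cdot$ in the anisotropic spaces give, for $\|d\|_{\cH}$ small, convergence $u_n\to u(d)$ in $\qone$ with $\|u(d)\|_{\qone}\lesssim\|d\|_{\cH}$ (which is~\eqref{eq:u-qone}), and a limiting correction $c(d)$, Lipschitz in $d$, with $c(d)-\bigl(\ell_1(d)g_1+\ell_2(d)g_2\bigr)=O(\|d\|_{\cH}^2)$ since $u(d)=O(\|d\|_{\cH})$ and the correction is driven by the quadratic term $u(d)\p_x u(d)$. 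One then sets $\mathcal{M}:=\{\,d-c(d)\;:\;d\in\cH,\ \|d\|_{\cH}<\eta'\,\}$; decomposing $\cH=\hperp\oplus\operatorname{span}(g_1,g_2)$, the map $d\mapsto d-c(d)$ is, up to a Lipschitz remainder of quadratic size, the projection onto $\hperp$, so $\mathcal{M}$ is the graph of a Lipschitz map from a ball of $\hperp$ into $\operatorname{span}(g_1,g_2)$ vanishing at $0$, i.e.\ a codimension-two Lipschitz submanifold of $\cH$ through $0$, contained in $B_\eta$ for $\eta$ suitably small. (An alternative is to freeze at the shear flow and put $u\p_x u$ on the right-hand side, reducing everything to $\mathcal{S}$ alone; but then the bottleneck becomes the tame estimate $\|u\p_x u\|_{\cH}\lesssim\|u\|_{\qone}^2$, which is delicate in the anisotropic scales $\qone$ and $\cH$ and seems to need precisely the quasilinear viewpoint.)

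Uniqueness and the converse are then forced by the linear theory. If $u',u''\in\qone$ are two small solutions of~\eqref{eq:yuuxuyy} for the same $d$, their difference $v$ solves the shear-flow linear problem with source $-(u'\p_x u'-u''\p_x u'')$ and zero lateral and top/bottom data; by the \emph{only if} direction of Theorem~\ref{theorem:existence-shear-X1} the data triples attached to $u'$ and to $u''$ both lie in the corresponding obstruction spaces, and manipulating these memberships one sees the source--data triple for $v$ lies in $\hperp$, so~\eqref{eq:u-qone} applies and yields $\|v\|_{\qone}\le C\eta\,\|v\|_{\qone}$, whence $v=0$ for $\eta$ small. Similarly, any small $\qone$-solution $u$ of~\eqref{eq:yuuxuyy} for $d\in B_\eta$ is, by the same \emph{only if} step, seen to coincide with the output of the scheme for a corrected datum in the relevant obstruction space and hence with $u(d)$ by uniqueness of the limit, which forces $d\in\mathcal{M}$. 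The step I expect to be the main obstacle is the linear input itself: one needs Theorem~\ref{theorem:existence-shear-X1} not just for the shear flow but for the operators $(y+w)\p_x-\p_{yy}$ with $w$ small in $\qone$, with the codimension-two obstruction space $\hperp(w)$ and the solution operator depending Lipschitz-continuously on $w$; proving this \emph{stability of the orthogonality conditions with respect to the base flow} — equivalently, that the singular profiles of Theorem~\ref{theorem:existence-shear-X1} deform continuously as the critical curve $\{y+w=0\}$ moves within the domain — is the heart of the matter, and the reason $\mathcal{M}$ is only Lipschitz rather than smooth; the contraction estimates, the two residual functionals, uniqueness and the converse are bookkeeping layered on top of it.
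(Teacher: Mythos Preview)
Your overall architecture is the paper's: a frozen-coefficient iteration in which, at each step, a two-parameter correction is added to the data so that the linear problem with base flow $u_n$ has a $\qone$ solution, and you correctly identify the stability of the orthogonality functionals under perturbation of the base flow as the heart of the matter. The definition of $\mathcal{M}$ as the graph of a Lipschitz map over a ball in $\hperp$ is also what the paper does.

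There is, however, a genuine gap in your contraction step. You assert that contraction estimates yield convergence $u_n\to u(d)$ in $\qone$, but the paper states explicitly that a Cauchy bound in $\qone$ is \emph{not} available: the equation for $w_n=u_{n+1}-u_n$ has source $-w_{n-1}\p_x u_n$ plus correction terms, and since $u_n\in\qone=H^{5/3}_xL^2_y\cap L^2_xH^5_y$ only gives $\p_x u_n\in H^{2/3}_xL^2_y$, this source cannot be placed in $H^1_xH^1_y$, so the data-regularity needed for a $\qone$ estimate on $w_n$ fails. The paper resolves this by proving the contraction in the intermediate space $\qhalf:=H^{7/6}_xL^2_y\cap L^2_xH^{7/2}_y$, chosen precisely because the Lipschitz dependence of the functionals $\ell^j$ on the base flow (Lemma~\ref{lem:regularity-phi-j}) requires control of $u_n-u_{n-1}$ at a regularity strictly above $\qnot$ but below $\qone$. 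Obtaining the $\qhalf$ estimate under the orthogonality constraints is done by an interpolation between $\qnot$ and the constrained $\qone$ theory, which in turn requires a decomposition of the dual profiles $\Phi^j$ into singular plus regular parts. The paper calls this ``the most involved step of the convergence proof''; it is not bookkeeping.

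Two further points. First, the paper does not analyze $(y+u_n)\p_x-\p_{yy}$ directly but performs the change of variable $z=y+u_n(x,y)$, turning the equation into $z\p_x U+\gamma_n\p_z U-\alpha_n\p_{zz}U=g$ with the degeneracy fixed at $z=0$; this is how the perturbed linear theory (Section~3) and the Lipschitz estimate on $\ell^j$ are made concrete, and it sidesteps having to track the moving curve $\{y+u_n=0\}$. Second, your uniqueness argument via the only-if direction of Theorem~\ref{theorem:existence-shear-X1} is more elaborate than needed and not quite right as written: the paper obtains uniqueness by a direct weak energy estimate on the difference $v=u'-u''$, which solves a linear equation with zero data and a source $-v\p_x u''$ that is small in $L^2_xH^{-1}_y$.
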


\begin{remark}[About orthogonality conditions]
	The necessity for orthogonality conditions  in order to have higher regularity solutions is well known in the context of elliptic equations in domains with corners (see \cite{Grisvard1,Grisvard2}) and the situation is very similar here. 
	A misleading aspect is that it is quite easy, assuming the existence of a smooth solution of \eqref{eq:eq0}, to prove \emph{a priori} estimates at any order (see \cite{theseRax}).
	However, it can be proved (both in the setting of elliptic equations in domains with corners and in the framework of  Theorem \ref{theorem:existence-shear-X1}) that there exist $C^\infty_0$ source terms for which the unique weak solution is not smooth. In fact, we shall identify some explicit  profiles, which are singular in the vicinity of the points $(x_i, 0)$, and which are associated with smooth source terms.

\end{remark}

Our original motivation stems from fluid mechanics. Indeed, the stationary Prandtl equation, which describes the behavior of a fluid with small viscosity in the vicinity of a wall, reads
\be
\label{Prandtl}
\begin{cases}
	u \d_x u + v \d_y u - \d_{yy} u = - \d_x p_E, \\
	u_{|y=0} = v_{|y=0}=0,\\
	\lim_{y \to \infty } u(x,y)=u_E(x),
\end{cases}
\ee
where $u_E(x)$ (resp.\ $p_E(x)$) is the trace of an outer Euler flow (resp.\ pressure) on the wall, and satisfies $u_E \d_x u_E = - \d_x p_E$.

As long as $u$ remains positive, \eqref{Prandtl} can be seen as a nonlocal, nonlinear diffusion type equation, the variable $x$ being the evolution variable. Using this point of view, Oleinik (see e.g.\ \cite[Theorem 2.1.1]{MR1697762}) proved the local well-posedness of a solution to \eqref{Prandtl} when the equation \eqref{Prandtl} is supplemented with a boundary data $u_{|x=0}=u_0$, where $u_0(y)>0$ for $y>0$ and such that $u_0'(0)>0$. Let us mention that such positive solutions exist globally when $ \d_x p_E\leq 0$, but are only local when $\d_x p_E>0$. More precisely, when $\d_x p_E=1$ for instance, for a large class of boundary data $u_0$, there exists $x^*>0$ such that $\lim_{x\to x^*} \p_y u(x,0)=0$. Furthermore, the solution may develop a singularity at $x=x^*$, known as Goldstein singularity \cite{Goldstein,Stewartson,DM}.
Downstream of the singularity, the solution $u$ is expected to change sign, taking negative values in the vicinity of the boundary and positive values in the vicinity of $+\infty$, the usual convention being that $u_E(x)>0$.
Such solutions are called ``recirculating solutions'', and the zone where $u<0$ is called a recirculation bubble.

In the recent preprint \cite{IM2022}, Sameer Iyer and Nader Masmoudi prove \emph{a priori} estimates in high regularity norms for smooth solutions to the Prandtl equation \eqref{Prandtl}, in the vicinity of explicit self-similar recirculating flows, called Falkner-Skan profiles. The latter are given by
\begin{align}
	u(x,y) & = x^m f'(\zeta), \\
	v(x,y) & = - y^{-1} \zeta f(\zeta) - \frac{m-1}{m+1} y^{-1} \zeta^2 f'(\zeta),
\end{align}
where $\zeta := (\frac{m+1}{2})^{\frac 12} y x^{\frac{m-1}{2}}$ is the self-similarity variable, $m$ is a real parameter and $f$ is the solution to the Falkner-Skan equation
\begin{equation}
	f''' + f f'' + \beta (1- (f')^2) = 0,
\end{equation}
where $\beta = \frac{2m}{m+1}$, subject to the boundary conditions $f(0) = f'(0) = 0$ and $f'(+\infty) = 1$. Such flows correspond to an outer Euler velocity field $u_E(x) = x^m$. 
For some particular values of $m$ (or, equivalently, $\beta$), these formulas provide physical solutions to \eqref{Prandtl} which exhibit recirculation (see~\cite{MR0204011}). 
Obtaining \emph{a priori} estimates for recirculating solutions to the Prandtl system \eqref{Prandtl} is very difficult, due to the combination of several difficulties (forward-backward system, nonlocality of the transport term  $v\p_y u$, loss of derivative).

In the present paper, we have chosen to focus on a different type of difficulty, and to 
consider the toy-model \eqref{eq:eq0}, which differs from \eqref{Prandtl} through the lack of the nonlinear transport term $v \d_y u$ and its associated difficulties  and the exclusion of the zones close to the wall and far from the wall.
For the model \eqref{eq:eq0}, \emph{a priori} estimates are easy to derive, see \cite[Chapter 4]{theseRax}.
The difficulty lies elsewhere, as explained previously. 
Indeed, in order to construct a sequence of approximate solutions satisfying the \emph{a priori} estimates, we need to ensure that the orthogonality conditions are satisfied all along the sequence.
The core of the proof is to 
keep track of these orthogonality conditions, and to analyze their dependency on the sequence itself.
This strategy is presented in greater detail in an early version of our work \cite{DMR}, to which we will refer throughout the text.
For the Prandtl system \eqref{Prandtl}, this difficulty has  recently been tackled by Sameer Iyer and Nader Masmoudi in \cite{IM2023}, building upon their \emph{a priori} estimates of \cite{IM2022} and the ideas developed in \cite{DMR}.
The methodology developed in the present paper can be adapted to many other settings,  as explained in the latest version of our work, see \cite{DMRMEMS}.
A natural extension of \eqref{eq:eq0} is the 1d  nonlinear Fokker-Planck equation in a bounded domain: consider equations of the type
\be\label{eq:FP}
\begin{cases}
v\p_x g + b[g]\p_v g - a[g]\p_{vv} g=0, & (x,v)\in \Om,\\
g_{|\Sigma_i}=\delta_i,\\
g_{|v=\pm 1}=0.
\end{cases}
\ee
Note that in the above system, the vertical variable $z$ has been relabeled as $v$, as is customary in kinetic models.
The coefficients $a$ and $b$ are nonlocal functionals of the unknown $g$. 
A typical example (see \cite{Villani}) is
\[
v\p_x g = \rho^\alpha \p_v (T\p_v g + g(v-u))
\]
where $\alpha\in [0,1]$ and $\rho, u, T$ are respectively the local density, velocity and temperature, namely
\begin{align*}
	\rho (x)&=\int_\Om g(x,v)\dd v,\\
	\rho(x) u(x)&=\int_\Om v g(x,v)\dd v,\\
	\rho(x)|u(x)|^2 + \rho(x) T(x)&=\int_\Om |v|^2 g(x,v)\dd v.
		\end{align*}
We believe that under suitable assumptions on the operators $a$ and $b$, the methods presented here could allow us to prove the existence of strong solutions of \eqref{eq:FP} in the vicinity of smooth changing-sign solution.

\section{The case of the linear shear flow}
\label{sec:shear}

We begin with the study of the linear system \eqref{eq:shear}, for which we give a notion of weak solution:
\begin{definition}[Weak solution] \label{def:weak-shear}
	Let $f \in L^2((x_0,x_1);H^{-1}(-1,1))$.\\
		Let $\delta_0, \delta_1 \in L^2((-1,1), |y|\dd y)$. 
	We say that $u \in L^2((x_0,x_1);H^1_0(-1,1))$ is a  \emph{weak solution} to~\eqref{eq:shear} when, for all $v \in H^1(\Omega)$ vanishing on $\partial\Om \setminus (\Sigma_0\cup\Sigma_1)$, the following weak formulation holds
	\begin{equation} \label{weak-shear}
		- \int_\Omega y u \d_x v + \int_\Om \d_y u \d_y v
		= \int_\Omega f v  + \int_{\Sigma_0} y \delta_0 v - \int_{\Sigma_1} y \delta_1 v.
	\end{equation}
\end{definition}

Weak solutions in the above sense are known to exist since the work of Fichera \cite[Theorem XX]{MR0111931} (which concerns generalized versions of \eqref{eq:shear}, albeit with vanishing boundary data).
Uniqueness dates back to \cite[Proposition 2]{BG} by Baouendi and Grisvard. A natural question is then to consider strong solutions, i.e.\ solutions for which \eqref{eq:shear} holds almost everywhere.
The main result on this topic is due to Pagani \cite[Theorem~5.2]{Pagani2} (see also \cite{Pagani1}), who introduced the space
\[
Z^0:=\{ u\in L^2(\Om), y\p_x u\in L^2(\Om),\ \p_y u \in L^2 (\Om)\text{ and } \p_{yy} u \in L^2(\Om)\}.
\]
It can easily be proved (see \cite{DMR}) that $Z^0$ is continuously embedded in
\be\label{def:qnot}
\qnot:=H^{2/3}_x L^2_y\cap L^2_x H^2_y.
\ee

\begin{proposition}[\cite{Pagani2}] \label{p:pagani-shear}
	Let $\delta_0, \delta_1 \in H^1 ((-1,1), |y| \dd y)$ such that $\delta_0(1) = \delta_1(-1) = 0$.
	Let $f \in L^2(\Omega)$. 
	The unique weak solution $u$ to \eqref{eq:shear} belongs to $Z^0(\Omega)$ and satisfies
	\begin{equation} \label{estimate-pagani-Z0}
		\| u \|_{Z^0} \lesssim \| f \|_{L^2} + \| \delta_0 \|_{H^1 ((-1,1), |y| \dd y)} + \| \delta_1 \|_{H^1 ((-1,1), |y| \dd y)}.
	\end{equation}
\end{proposition}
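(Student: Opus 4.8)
\textbf{Proof strategy for Proposition \ref{p:pagani-shear}.}

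The plan is to reduce to the case of homogeneous lateral data and then establish the $Z^0$-estimate through a series of energy-type identities, following Pagani's approach. First I would lift the boundary data: pick $U \in H^1(\Omega)$ with $U_{\rvert\Sigma_i} = \delta_i$, $U_{\rvert y = \pm 1} = 0$, chosen so that $U$ depends only on $y$ near each lateral boundary (this is possible precisely because $\delta_i((-1)^i) = 0$ is assumed, so the data is compatible at the corners on the outflow side). Then $w := u - U$ is the weak solution of $y\p_x w - \p_{yy}w = f + \p_{yy}U - y\p_x U =: \tilde f \in L^2(\Omega)$ with vanishing lateral data, and one checks $\|U\|_{Z^0} \lesssim \sum_i \|\delta_i\|_{H^1((-1,1),|y|\dd y)}$ using that $U$ is $x$-independent near the sides. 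So it suffices to prove the estimate when $\delta_0 = \delta_1 = 0$.

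Next, with homogeneous lateral data, I would derive three energy estimates. (i) Testing the weak formulation against $u$ itself (legitimate after a regularization/Galerkin argument): the term $-\int y u \p_x u = \tfrac12\int_{\Sigma_1} y u^2 - \tfrac12\int_{\Sigma_0} y u^2 \geq 0$, because $y < 0$ on $\Sigma_1$ and $y>0$ on $\Sigma_0$, both contributions having the favorable sign; this gives $\|\p_y u\|_{L^2(\Omega)} \lesssim \|f\|_{L^2}$ and hence (Poincaré in $y$) control of $\|u\|_{L^2}$. (ii) To control $\p_{yy}u$ and $y\p_x u$, rewrite the equation as $\p_{yy}u = y\p_x u - f$ and test the equation against $-\p_{yy}u$: the key term is $\int (y\p_x u)(-\p_{yy}u)$, which after integration by parts in $y$ and then in $x$ produces $\tfrac12\int (\p_y u)^2$ evaluated as a sign-definite boundary term on $\Sigma_0 \cup \Sigma_1$ (again using the sign of $y$) plus a harmless $-\tfrac12\int \p_x u\, \p_y u$-type term that vanishes or is absorbed; one extracts $\|\p_{yy}u\|_{L^2}^2 \lesssim \|f\|_{L^2}^2 + \|\p_y u\|_{L^2}^2$, and then $\|y\p_x u\|_{L^2} \leq \|\p_{yy}u\|_{L^2} + \|f\|_{L^2}$ directly from the equation. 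This closes the $Z^0$ bound. Finally, the continuous embedding $Z^0 \hookrightarrow \qnot$, already recorded in the excerpt (see \cite{DMR}), upgrades this to a $\qnot$-estimate, although the proposition as stated only claims the $Z^0$ bound; the anisotropic $H^{2/3}_x$ regularity is obtained by interpolating $y\p_x u \in L^2$ against $u \in L^2$ and handling the degeneracy of $y$ near $\{y=0\}$ via a Hardy-type argument.

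The main obstacle is making the formal energy computations rigorous: the equation is forward-backward, so $u$ does not a priori have enough regularity in $x$ to justify testing against $u$ or $\p_{yy}u$, and the weak solution from Definition \ref{def:weak-shear} only lives in $L^2_x H^1_{0,y}$. The standard remedy—which I would carry out—is a vanishing-viscosity / elliptic regularization: replace the equation by $y\p_x u^\eps - \eps\p_{xx}u^\eps - \p_{yy}u^\eps = \tilde f$ with appropriate boundary conditions, for which the solution is smooth enough to perform all integrations by parts; one checks the boundary terms generated by the $\eps\p_{xx}$ term have a sign or are controlled uniformly in $\eps$, derives the $Z^0$-bound uniformly in $\eps$, and passes to the limit, identifying the limit with the weak solution via the uniqueness statement of Baouendi--Grisvard \cite{BG}. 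A secondary subtlety is the choice of the boundary lift $U$ in the weighted space $H^1((-1,1),|y|\dd y)$: one must verify that the degenerate weight $|y|$ is exactly what is needed for $y\p_x U \in L^2$ and for the corner-compatibility, which is why the hypothesis is phrased with that weight rather than plain $H^1$.
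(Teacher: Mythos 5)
You should first be aware that the paper does not prove Proposition \ref{p:pagani-shear} at all: it is imported verbatim from Pagani \cite[Theorem 5.2]{Pagani2} (with the embedding $Z^0\hookrightarrow \qnot$ delegated to \cite{DMR}). So your attempt is necessarily a different route, and it has to be judged on its own merits; as written it contains two genuine gaps. First, the reduction to homogeneous lateral data does not work under the stated hypotheses. Your lift $U$, chosen $x$-independent near the sides, produces the source term $\tilde f = f + \p_{yy}U - y\p_x U$, and near $\{x=x_i\}$ one has $\p_{yy}U = \delta_i''$, which is not in $L^2$: the assumption is only $\delta_i \in H^1((-1,1),|y|\dd y)$, which controls neither $\delta_i''$ nor even $\delta_i'$ in unweighted $L^2$ near $y=0$. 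The weighted space $H^1(|y|\dd y)$ is precisely the trace space of $Z^0$ on $\{x=x_i\}$, and Pagani's theorem handles such data through trace theory and representation formulas, not by lifting it into an $L^2$ right-hand side; a naive lift silently strengthens the hypotheses to roughly $\delta_i\in H^2$.

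Second, and more importantly, the cross term in your step (ii) is not ``harmless''. Integrating by parts, $-\int_\Om y\,\p_x u\,\p_{yy}u = \int_\Om y\,\p_{xy}u\,\p_y u + \int_\Om \p_x u\,\p_y u$; the first term indeed has a favorable sign (it reduces to outflow traces of $(\p_y u)^2$), but the second cannot be absorbed, since only $y\p_x u$, not $\p_x u$, is controlled in $L^2$, and it does not vanish: it reduces to boundary contributions carried by the traces of $u$ at the degenerate corner points $(x_i,0)$, with the unfavorable sign. This is exactly where the singular solutions $\busing^i$ of Definition \ref{def:sing-profiles} concentrate, and controlling the behavior of $u$, $\p_y u$, $\p_{yy}u$ near the line $\{y=0\}$ and these corners is the actual content of Pagani's result; his proof goes through explicit kernels for the half-plane problem (the same special-function analysis as in Lemma \ref{prop:rGk}) combined with localization, not through these two energy identities. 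Relatedly, your elliptic regularization $-\eps\p_{xx}$ requires boundary data on the full lateral sides and creates outflow boundary layers; it is the standard device for existence and uniqueness of \emph{weak} solutions (Fichera \cite{MR0111931}, Baouendi--Grisvard \cite{BG}), but obtaining the $Z^0$ bound uniformly in $\eps$ is not straightforward and is not sketched. A minor point: in your step (i) the boundary terms you display are the inflow ones (which vanish once the data is homogeneous); the sign-favorable contributions are the outflow traces on $\{x_0\}\times(-1,0)$ and $\{x_1\}\times(0,1)$.
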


Let us now observe that equation \eqref{eq:shear} is stable by differentiation with respect to $x$, at least formally. More precisely, if $u$ is a smooth (say, $H^1_x H^1_y$) solution of \eqref{eq:shear}, with smooth data $(f,\delta_0,\delta_1)$, then $\p_x u$ is also a solution of \eqref{eq:shear} with the data $(\p_x f, \Delta_0, \Delta_1)$, with
\be\label{eq:def-Di}
\Delta_i(y):= \frac{f(x_i,y) + \p_y^2 \delta_i(y)}{y}.
\ee
As  a consequence, \emph{if} $u\in H^1_x H^1_y$, then $u$ satisfies the a priori estimate
 \begin{equation} \label{eq:apriori-ux}
	\| \p_x u \|_{L^2_x H^1_y} 
	\lesssim 
	\| \p_x f \|_{L^2_x(H^{-1}_y)} +
	\| \Delta_0 \|_{L^2(\Sigma_0, |y| \dd y)} + 
	\| \Delta_1 \|_{L^2(\Sigma_1, |y| \dd y)}.
\end{equation}
However, generally, \textit{the unique weak solution $u\in Z^0$ corresponding to smooth data $(f,\delta_0,\delta_1)$ does not belong to $H^1_x H^1_y$}. 
Indeed, assume that $(f,\delta_0,\delta_1)\in C^\infty_c(\Om)\times C^\infty_c((0,1))\times C^\infty_c((-1,0))$\footnote{The sole purpose of  the regularity and compact support assumptions is to simplify the presentation, and to emphasize that the issue does not lie in the lack of regularity of the data. These assumptions will be removed later.}
, and consider the unique solution $w\in Z^0$ of 
\begin{equation} \label{eq:ux}
	\begin{cases}
		y \p_x w - \p_{yy} w = \p_x f,\\
		w_{\rvert \Sigma_i} = \Delta_i, \\
		w_{\rvert y = \pm 1} = 0.
	\end{cases}
\end{equation}
Attempting to reconstruct $u$ from $w$, we define
\[
\tilde u:=
\begin{cases}
	\delta_0 + \int_{x_0}^x w\quad \text{if } y>0,\\
	\delta_1 - \int_x^{x_1} w\quad \text{if } y<0.
	\end{cases}
\]
Then $\tilde u_{|\Sigma_i}=\delta_i$ by construction, and $\tilde u$ is a solution of \eqref{eq:shear} in $\Om\setminus\{ y=0\}$. However, $\tilde u$ and $\p_y \tilde u$ may have a jump across $y=0$. As a consequence, $\tilde u$ is not a solution of \eqref{eq:shear} in general. More precisely, 
\begin{align*}
&	\tilde u \text{ is a solution of }\eqref{eq:shear} \\
 \iff & [\tilde u]_{|y=0}=  [\p_y \tilde u]_{|y=0}=0\\
	\iff & \delta_0(0)-\delta_1(0)+ \int_{x_0}^{x_1} w(x,0)\:dx= \delta_0'(0)-\delta_1'(0)+\int_{x_0}^{x_1}\p_y  w(x,0)\:dx=0.
\end{align*}
Furthermore, if $ [\tilde u]_{|y=0}=  [\p_y \tilde u]_{|y=0}=0$, then $\tilde u \in Z^0$, and in this case $\tilde u=u$.

Let us now say a few words about the more general case $(f,\delta_0, \delta_1)\in \cH$.
By definition of $\cH$, in this case $\p_x f \in L^2_x H^1_y\subset L^2$, and $\Delta_i= \p_y^2 \delta_i /y \in H^1(|y| \dd y)$. Therefore, according to Proposition \ref{p:pagani-shear}, there exists a unique solution $w\in Z^0$ of \eqref{eq:ux}. 
Following the same argument as above, we define the linear forms
\[
\overline{\ell^j}:(f,\delta_0, \delta_1)\in \cH\mapsto \int_{x_0}^{x_1}\p_y^j w(x,0)\:dx + \p_y^j (\delta_0-\delta_1)(0).
\]
Note that since $w\in Z^0\subset \qnot$ (see \eqref{def:qnot}), $\p_y w_{|y=0}\in H^{1/6}(x_0, x_1)$, and therefore the integral is well-defined.

We eventually obtain the following result:
\begin{proposition}[Orthogonality conditions for higher regularity]~\\
	Let $(f,\delta_0, \delta_1)\in \cH$.
Consider the unique solution $u\in Z^0$ of \eqref{eq:shear}.

Then $\p_x u\in Z^0$  if and only if $\overline{\ell^j}(f,\delta_0, \delta_1)=0$ for $j=0,1$.

\end{proposition}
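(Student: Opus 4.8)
The plan is to make rigorous the formal argument sketched immediately above the statement, namely that $u = \tilde u$ precisely when the two jump conditions $[\tilde u]_{|y=0} = [\p_y \tilde u]_{|y=0} = 0$ hold, and that these conditions are exactly $\overline{\ell^0} = \overline{\ell^1} = 0$. The starting point is Proposition~\ref{p:pagani-shear}: since $(f,\delta_0,\delta_1)\in\cH$ we have $\p_x f \in L^2_x H^1_y \subset L^2(\Om)$ and $\Delta_i = \p_y^2\delta_i/y \in H^1((-1,1),|y|\dd y)$ with $\Delta_i((-1)^i)=0$ (using the boundary vanishing conditions built into $\cH$), so \eqref{eq:ux} has a unique weak solution $w\in Z^0$. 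By the embedding $Z^0\hookrightarrow \qnot$ we get $\p_y w_{|y=0}\in H^{1/6}(x_0,x_1)$ and $w_{|y=0}\in H^{1/6}(x_0,x_1)$ by trace theory on the anisotropic space $\qnot$, so that both integrals defining $\overline{\ell^j}$, $j=0,1$, make sense and the linear forms are continuous on $\cH$.

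Next I would define $\tilde u$ by the formula above and verify the elementary facts: $\tilde u_{|\Sigma_i}=\delta_i$, $\tilde u_{|y=\pm1}=0$ (here one uses $\delta_i((-1)^i)=0$ and the analogous conditions on the traces of $w$, which come from $\Delta_i((-1)^i)=0$), and that $\tilde u$ satisfies $y\p_x\tilde u - \p_{yy}\tilde u = f$ in $\Om\setminus\{y=0\}$ — this is just differentiating the defining integrals and invoking the equation for $w$ together with $\p_x f = \p_x f$, after noting that $f(x_i,y)$ is recovered from the boundary data of $w$ at $\Sigma_i$. One then shows, by integration by parts in the weak formulation \eqref{weak-shear} restricted to test functions supported away from and then across $\{y=0\}$, that $\tilde u$ is a genuine weak solution of \eqref{eq:shear} on all of $\Om$ if and only if the jumps $[\tilde u]_{|y=0}$ and $[\p_y\tilde u]_{|y=0}$ vanish; the jump terms appear as boundary integrals on $\{y=0\}$ when one integrates $\int_\Om \p_y\tilde u\,\p_y v$ by parts separately on $\{y>0\}$ and $\{y<0\}$. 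Evaluating the jumps from the explicit formula for $\tilde u$ gives
\[
[\tilde u]_{|y=0} = \overline{\ell^0}(f,\delta_0,\delta_1), \qquad [\p_y\tilde u]_{|y=0} = \overline{\ell^1}(f,\delta_0,\delta_1),
\]
so that $\tilde u$ solves \eqref{eq:shear} iff $\overline{\ell^0}=\overline{\ell^1}=0$.

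To finish, I would argue that when these two conditions hold, $\tilde u\in Z^0$ — one must check $\tilde u\in L^2$, $\p_y\tilde u\in L^2$, $\p_{yy}\tilde u\in L^2$ and $y\p_x\tilde u = f+\p_{yy}\tilde u\in L^2$, which all follow from $w\in Z^0$ and the integral representation once the jumps are absent (the continuity at $\{y=0\}$ is what upgrades the piecewise $Z^0$ regularity to global $Z^0$ regularity) — and then by the uniqueness part of Proposition~\ref{p:pagani-shear}, $\tilde u = u$, whence $\p_x u = w \in Z^0$. Conversely, if $\p_x u\in Z^0$, then $\p_x u$ solves \eqref{eq:ux} (this is the formal differentiation computation \eqref{eq:def-Di}, now justified because $u\in H^1_xH^1_y$), so $\p_x u = w$ by uniqueness; integrating $w=\p_x u$ from the lateral boundaries and matching with the boundary data $\delta_i$ forces $\tilde u = u$, hence the jumps of $\tilde u$ vanish, i.e.\ $\overline{\ell^0}=\overline{\ell^1}=0$.

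The main obstacle I anticipate is the rigorous justification of the trace on $\{y=0\}$ and of the integration-by-parts / jump computation for functions that are only in $Z^0$ (not smooth): one needs that $v\mapsto \int_\Om \p_y u\,\p_y v$ can be split across $\{y=0\}$ with well-defined boundary terms, which requires knowing that $\p_y\tilde u$ has an $L^2$ — or better, an $H^{1/6}$ via $\qnot$ — trace on the interior line $\{y=0\}$ from both sides, and that these one-sided traces are the ones appearing in $\overline{\ell^j}$. This is a density/approximation argument (smooth data, then pass to the limit using \eqref{estimate-pagani-Z0} and the continuity of $\overline{\ell^j}$ on $\cH$) but it is the step that needs genuine care; everything else is bookkeeping with the explicit formula for $\tilde u$.
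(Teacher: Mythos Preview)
Your proposal is correct and follows essentially the same approach as the paper: the paper sketches exactly this argument (construct $w\in Z^0$ via Proposition~\ref{p:pagani-shear}, form $\tilde u$ by integration, identify the two jump conditions with $\overline{\ell^0}$ and $\overline{\ell^1}$, and invoke uniqueness in $Z^0$) in the paragraphs immediately preceding the proposition, first for $C^\infty_c$ data and then for general $(f,\delta_0,\delta_1)\in\cH$. Your added observation that the trace/jump step is the only point requiring genuine care, and that it is handled by density using the continuity of $\overline{\ell^j}$ and of the $Z^0$ estimate, is exactly the right diagnosis; note also that the jumps of $\tilde u$ and $\p_y\tilde u$ across $\{y=0\}$ are \emph{constant} in $x$, which makes this step slightly simpler than you suggest.
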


For further purposes, it is useful to write the linear forms $\overline{\ell^j}$ as scalar products. To that end, let us introduce the following definition:

\begin{definition}[Dual profiles] \label{lem:def-dual-shear}
	We define $\overline{\Phi^0}$, $\overline{\Phi^1} \in Z^0(\Om_\pm)$ as the unique solutions to
	\begin{equation} \label{eq:Phij-shear}
		\begin{cases}
			-y\p_x \overline{\Phi^j} -\p_{yy}\overline{\Phi^j} =0 & \text{in } \Om_\pm,\\
			\left[\overline{\Phi^j} \right]_{|y=0}=\mathbf 1_{j=1}, \\ \left[\p_y\overline{\Phi^j} \right]_{|y=0}=-\mathbf 1_{j=0},\\
			\overline{\Phi^j}_{\rvert \p \Om \setminus(\Sigma_0\cup \Sigma_1)}=0.
		\end{cases}
	\end{equation}
\end{definition}
The existence, uniqueness, and $Z^0$ regularity of $\overline{\Phi^j}$, $j=0,1$, follow easily from the results of Fichera \cite{MR0111931}, Baouendi and Grisvard \cite{BG} and Pagani \cite{Pagani1,Pagani2} recalled above, and from a simple lifting argument. Furthermore, for any
 $f \in H^1((x_0,x_1);L^2(-1,1))$, $\delta_0, \delta_1 \in H^1((-1,1), |y| \dd y)$ with $\delta_0(1) = \delta_1(-1) = 0$ and $\Delta_0, \Delta_1 \in H^1(-1,1, |y| \dd y)$ with $\Delta_0(1) = \Delta_1(-1)= 0$ (see \eqref{eq:def-Di}), it can easily be proved that 
 \[
 \int_{x_0}^{x_1}\p_y^j w(x,0)\:dx = 	\int_\Om \p_x f \overline{\Phi^j} + \int_{\Sigma_0} y\Delta_0 \overline{\Phi^j}  - \int_{\Sigma_1} y \Delta_1 \overline{\Phi^j} ,
 \]
 and therefore
\[
\overline{\ell^j}(f,\delta_0,\delta_1)  = 
\p_y^j \delta_0(0)-\p_y^j\delta_1(0)
+ \int_\Om \p_x f \overline{\Phi^j} 
+ \int_{\Sigma_0} y \Delta_0\overline{\Phi^j}  
- \int_{\Sigma_1} y \Delta_1 \overline{\Phi^j}.
\]   
 Additionally, when the data $(f,\delta_0, \delta_1)$ are smooth and satisfy the orthogonality conditions, we also deduce from the equation a gain of regularity in $y$:
 
 \begin{lemma}\label{prop:dy-5-shearflow}
 	Let $f \in L^2_x H^3_y$  and $\delta_0, \delta_1 \in H^1((-1,1), |y| \dd y)$.
 	Let $u$ be the unique weak solution to \eqref{eq:shear}.
 	Assume that $u \in H^1_x H^2_y$ and $\p_y^3 \delta_i  \in H^1((-1,1), |y| \dd y)$, with
 	$\delta_0(1)=\delta_1(-1)=\Delta_0(1)=\Delta_1(-1)=0$.
 	Assume furthermore that $\p_x \p_y f\in L^2((x_0,x_1)\times (1/2, 1))\cap L^2((x_0,x_1)\times (-1,-1/2))$ and $\Delta_i \in H^2(\Sigma_i\cap \{|y|\geq 1/2\})$.
 	Then $\p_y^5 u \in L^2(\Omega)$ and
 	\begin{equation}
 	\ba 
 			\|u\|_{L^2_x H^5_y} \lesssim &\| u \|_{H^1_x H^2_y} + \| f \|_{L^2_x H^3_y}  + \|\p_x \p_y f \mathbf 1_{|y|\geq 1/2}\|_{L^2} \\
 			& + \sum_{i \in \{0,1\}} \| \p_y^3 \delta_i \|_{H^1((-1,1), |y| \dd y)} + \|\Delta_i\|_{H^2(\Sigma_i\cap \{|y|\geq 1/2\})} .
 		\ea
 	\end{equation}
 \end{lemma}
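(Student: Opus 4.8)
\textbf{Proof strategy for Lemma \ref{prop:dy-5-shearflow}.}

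The plan is to obtain the $L^2_x H^5_y$ estimate by bootstrapping the equation \eqref{eq:shear} twice in the $y$ variable, treating $\p_y^5 u$ via the interior equation away from $y=0$ and via the differentiated equation near $y=0$. First I would recall that by assumption $u \in H^1_x H^2_y$, so $y\p_x u \in L^2_x H^1_y \subset L^2$; combined with $f \in L^2_x H^3_y$, the equation $\p_{yy} u = y\p_x u - f$ immediately gives $\p_{yy} u \in H^1_x \cap L^2_x H^1_y$, and differentiating the identity $\p_y^2 u = y\p_x u - f$ in $y$ yields $\p_y^3 u = \p_x u + y\p_x\p_y u - \p_y f$. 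Here $\p_x u \in L^2_x H^1_y$ and $y \p_x \p_y u$ needs care: $\p_x\p_y u \in L^2$, so $y\p_x\p_y u \in L^2$, giving $\p_y^3 u \in L^2$ with the right bound; but to push to $\p_y^4$ and $\p_y^5$ I cannot simply keep differentiating in $y$, because each differentiation of $y\p_x u$ produces a factor $\p_x u$ that I only control in $L^2_x H^1_y$, and then $\p_x \p_y^2 u$, $\p_x\p_y^3 u$ are not a priori in $L^2$.

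The resolution is exactly the $x$-differentiated equation \eqref{eq:ux}: the hypotheses $\p_y^3\delta_i \in H^1((-1,1),|y|\dd y)$, $\Delta_0(1)=\Delta_1(-1)=0$, together with $u \in H^1_x H^2_y$ (so that $w := \p_x u$ is a legitimate $Z^0$ weak solution of \eqref{eq:ux} with data $(\p_x f, \Delta_0, \Delta_1)$), let me apply Proposition \ref{p:pagani-shear} to $w$. This is not quite enough on its own, since Proposition \ref{p:pagani-shear} only gives $\p_y^2 w = \p_y^2\p_x u \in L^2$, which combined with the previous step upgrades $\p_y^3 u = \p_x u + y\p_x\p_y u - \p_y f$ to $\p_y^4 u \in L^2$: differentiating once more in $y$ gives $\p_y^4 u = 2\p_x\p_y u + y\p_x\p_y^2 u - \p_y^2 f$, and now $\p_x\p_y u \in L^2_x H^1_y$ is controlled by the $H^1_x H^2_y$ norm of $u$, while $y\p_x\p_y^2 u = y\p_y^2 w \in L^2$ by Pagani applied to $w$, and $\p_y^2 f \in L^2$. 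So far so good; the genuinely delicate point is $\p_y^5 u$, for which I need $\p_x\p_y^3 u = \p_y^3 w \in L^2$. Pagani's regularity for $w$ does not deliver a third $y$-derivative globally, so this is where the extra localized hypotheses enter: away from $\{y=0\}$ (say $|y|\geq 1/2$) the equation for $w$ is uniformly parabolic, and standard interior-in-$x$, up-to-the-lateral-boundary parabolic regularity — using $\p_x\p_y f \in L^2$ on $|y|\geq 1/2$ and $\Delta_i \in H^2$ on $\Sigma_i \cap\{|y|\geq 1/2\}$ — gives $\p_y^3 w \in L^2(\{|y|\geq 1/2\})$; whereas on $\{|y|\leq 1/2\}$ I write $\p_y^5 u = \p_y^3(y\p_x u - f) = 3\p_x\p_y^2 u + y\p_x\p_y^3 u - \p_y^3 f = 3\p_y^2 w + y\p_y^3 w - \p_y^3 f$, where the factor $y$ is bounded by $1/2$ there, and $\p_y^3 w$ on $|y|\leq 1/2$ is again obtained from Pagani's estimate on $w$ combined with the equation $\p_y^2 w = y\p_x w - \p_x f$ differentiated in $y$: $\p_y^3 w = \p_x w + y\p_x\p_y w - \p_x\p_y f$, and $\p_x w = \p_x^2 u$, $y\p_x\p_y w = y\p_x^2\p_y u$ — hmm, these second $x$-derivatives of $u$ are not controlled, so in fact I must instead get $\p_y^3 w$ directly from the localized parabolic estimate near $y=0$ as well, replacing $y\p_x$ by the good variable.

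The cleanest route, which I would actually carry out, is therefore to run a single weighted estimate: multiply the equation for $w$ (resp. its $y$-derivative) by an appropriate power of $y$ and integrate, exploiting the sign structure of the forward-backward operator $y\p_x$ exactly as in the proof of Proposition \ref{p:pagani-shear}, but one derivative higher, with the lateral boundary terms absorbed by the $H^2(\Sigma_i\cap\{|y|\geq 1/2\})$ bound on $\Delta_i$ and the interior source term by $\|\p_x\p_y f\,\mathbf 1_{|y|\geq 1/2}\|_{L^2}$; the degeneracy at $y=0$ costs nothing because near $y=0$ one uses the elliptic relation $\p_y^2 w = y\p_x w - \p_x f$ to trade a $y$-weighted $x$-derivative for two $y$-derivatives. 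Combining the near-$\{y=0\}$ and the $|y|\geq1/2$ regions with a partition of unity, and then feeding $\p_y^3 w \in L^2$ back into $\p_y^5 u = 3\p_y^2 w + y\p_y^3 w - \p_y^3 f$, yields $\p_y^5 u \in L^2(\Omega)$ with the stated estimate. \textbf{The main obstacle} is precisely this last half-derivative: organizing the localization near the degeneracy line $\{y=0\}$ so that the singular factor $1/y$ implicit in passing from $x$-derivatives to $y$-derivatives (cf. \eqref{eq:def-Di}) does not destroy the $L^2$ control — this is handled by never dividing by $y$ near $y=0$ and instead always using the equation to convert, and by only invoking genuine higher parabolic regularity for $w$ in the region $|y|\geq 1/2$ where the operator is non-degenerate.
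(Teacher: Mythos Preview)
The paper does not actually prove this lemma; it is stated without proof (the paper is a shortened version of \cite{DMR}, to which it refers for details). So there is no argument to compare against directly, and I can only assess your proposal on its own merits.

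Your general outline is right: bootstrap via $\p_y^2 u = y\p_x u - f$ to get $\p_y^3 u,\p_y^4 u\in L^2$ from the hypothesis $u\in H^1_xH^2_y$, then treat the last derivative by splitting into a neighbourhood of $\{y=0\}$ and the non-degenerate region $|y|\geq 1/2$. But the concrete implementation goes off track. Near $y=0$ you insist on estimating $\p_y^3 w$ with $w=\p_x u$, which forces you into $\p_x^2 u$ terms you cannot control (and which you yourself flag). The cleaner route, which avoids second $x$-derivatives entirely, is to apply Pagani's $Z^0$ estimate to a localized version of $v:=\p_y^3 u$ rather than to $w$. Indeed $v$ satisfies
\[
y\p_x v-\p_{yy}v=\p_y^3 f-3\p_x\p_y^2 u\in L^2,
\qquad v_{|\Sigma_i}=\p_y^3\delta_i\in H^1(|y|\dd y),
\]
and you already know $v,\p_y v\in L^2$; cutting off near $y=\pm 1$ with a smooth $\psi(y)$ supported in $|y|<3/4$ makes $\psi v$ a legitimate weak solution with homogeneous Dirichlet at $y=\pm 1$, so Proposition~\ref{p:pagani-shear} gives $\p_{yy}(\psi v)=\p_y^5 u\in L^2(|y|\leq 1/2)$ directly. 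This replaces both your failed attempt to bound $\p_y^3 w$ and the vague ``single weighted estimate'' paragraph, which as written is hand-waving rather than an argument. Note also that your invocation of Pagani for $w=\p_x u$ requires $\p_x f\in L^2$, which is not among the hypotheses of the lemma; fortunately you never actually need that step, since $\p_y^2 w\in L^2$ is already contained in the assumption $u\in H^1_xH^2_y$.

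For the region $|y|\geq 1/2$ your appeal to ``standard parabolic regularity for $w$'' is in the right spirit but is not an argument either: you have to specify precisely which estimate you run and check that the lateral data $\Delta_i\in H^2(\Sigma_i\cap\{|y|\geq 1/2\})$ and the localized source control $\|\p_x\p_y f\,\mathbf 1_{|y|\geq 1/2}\|_{L^2}$ are exactly what is consumed there, and that no unwanted $\p_x f$ or $\p_x^2 u$ sneaks in. This is the only part of the proof that genuinely uses those two extra hypotheses, and it needs to be written out rather than asserted.
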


 In order to conclude this section, let us now investigate the case when the orthogonality conditions  are not satisfied. We first introduce some notation. For $x\in (0, + \infty)$, $y\in \R$, we introduce some ``polar-like'' variables
 \begin{equation} \label{eq:r-t}
 	r := (y^2 + x^{\frac 23})^{\frac 12}
 	\quad \text{and} \quad
 	t := y x^{-\frac 13}
 \end{equation}
 which are consistent with the scaling invariance of the operator $y\p_x - \p_{yy}$. 
 
We first prove the following Lemma:
 \begin{lemma}
  \label{prop:rGk}
For every $k \in \Z$, equation $y\p_x v-\p_{yy} v=0$, set in the domain $(0, +\infty)\times \mathbb R$ and endowed with the boundary condition $v(x=0, y)=0$ for $y>0$,  has a solution of the form 
$v_k = r^{\frac12+3k}\gk{k}(t)$
\begin{equation}
	v_k := r^{\frac 12 + 3k} \gk{k}(t)
\end{equation}
with the variables $(r,t)$ of \eqref{eq:r-t} and $\gk{k} \in C^\infty(\R;\R)$ is a smooth bounded function satisfying $\gk{k}(-\infty) = 1$ and $\gk{k}(+\infty) = 0$.

 \end{lemma}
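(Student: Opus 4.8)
The plan is to look for a separated-variable solution adapted to the scaling invariance $y \mapsto \lambda y$, $x \mapsto \lambda^3 x$ of the operator $\mathcal{L} := y\p_x - \p_{yy}$, which is precisely why the ``polar-like'' variables $(r,t)$ of \eqref{eq:r-t} are introduced: $\mathcal{L}$ sends a function homogeneous of degree $s$ (in the sense $r \mapsto \mu r$, $t$ fixed) to one homogeneous of degree $s-2$, so the ansatz $v_k = r^{\frac12+3k}\gk{k}(t)$ is the natural candidate. First I would substitute $v = r^{s}g(t)$ into $\mathcal{L}v = 0$ and compute $\p_x r$, $\p_x t$, $\p_y r$, $\p_{yy}$ in terms of $(r,t)$; using $r^2 = y^2 + x^{2/3}$ and $t = yx^{-1/3}$ one gets $x^{2/3} = r^2/(1+t^2)$, so every term reorganizes into $r^{s-2}$ times an ordinary differential expression in $g$ and $t$. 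Setting $s = \frac12 + 3k$ should make the $r$-powers cancel and leave a second-order linear ODE for $\gk{k}$ on $\R$, of the schematic form $(1+t^2)\gk{k}'' + (\text{first-order terms in } t)\gk{k}' + (\text{constant depending on }k)\gk{k} = 0$, with smooth coefficients and leading coefficient $1+t^2 > 0$, hence no finite singular points.

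Next I would analyze this ODE. Because the coefficients are smooth and the equation is nondegenerate on all of $\R$, the two-dimensional solution space consists of $C^\infty(\R)$ functions, so any solution is automatically smooth; the content is to pick out the one-dimensional subspace (up to scaling) that is \emph{bounded} with the prescribed limits $\gk{k}(-\infty)=1$, $\gk{k}(+\infty)=0$. For this I would examine the behaviour as $t \to \pm\infty$: dividing by $1+t^2 \sim t^2$, the equation becomes asymptotically $g'' + (a/t)g' + (b/t^2)g \approx 0$, an Euler-type equation at infinity whose indicial roots determine whether solutions behave like constants or like powers $t^{\alpha}$. I expect one family of solutions to stay bounded (tending to a constant) at $+\infty$ and another to blow up, and symmetrically at $-\infty$; a dimension count (generic solution is unbounded at both ends, the bounded-at-$+\infty$ solutions form a line, likewise at $-\infty$) combined with the boundary condition $v(0,y) = 0$ for $y>0$ — which in these variables corresponds to $t \to +\infty$ since $x=0^+$, $y>0$ forces $t\to+\infty$ — should single out, up to a constant multiple, a unique bounded solution vanishing ``at $t=+\infty$''. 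One then normalizes by $\gk{k}(-\infty)=1$ (after checking this limit is nonzero, which is where the asymptotic analysis must be done carefully rather than just invoked), and reads off $\gk{k}(+\infty)=0$ from the construction. For $k=0$ one can likely write $\gk{0}$ in closed form in terms of an error-function-type integral, $\gk{0}(t) = c\int_t^{+\infty} e^{-\xi^3/\text{const}}\,d\xi$ up to a change of variable, which gives a sanity check; this connects back to the profiles $\gl$ already named in the macros.

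The main obstacle I anticipate is not the algebraic reduction to the ODE — that is routine, if tedious — but rather the \emph{asymptotic/connection analysis} of the ODE at $t=\pm\infty$: one must show that the subspace of solutions bounded (and with a nonzero limit) at $-\infty$ and the subspace decaying at $+\infty$ intersect nontrivially, and that the resulting $\gk{k}$ genuinely has the stated two-sided limits with $\gk{k}(-\infty)\neq 0$ so the normalization makes sense. A clean way to handle this uniformly in $k$ is to change variables back to $(x,y)$ with $x=1$ fixed, turning the problem into an ODE in $y$ on $\R$ of the form $\gk{k}'' + \tfrac{y}{3}\gk{k}' - (\tfrac12+3k)\tfrac{?}{?}\gk{k} = \dots$ — i.e. a perturbed parabolic-cylinder / Hermite-type equation — whose solutions' behaviour as $y\to\pm\infty$ (one decaying like $e^{-y^3/9}$-ish, one growing) is classical; this avoids repeatedly redoing Euler-equation indicial computations and makes the monotone-profile claim ($\gk{k}$ interpolating between $1$ and $0$) transparent at least for $k=0$, with the general case following by the same ODE-theoretic argument. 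I would also double-check that the homogeneity exponent pairs correctly with the jump conditions of the $\overline{\Phi^j}$ in Definition~\ref{lem:def-dual-shear}, since these $v_k$ are presumably the building blocks of the singular profiles referenced in the introduction.
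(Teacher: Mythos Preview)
Your reduction to an ODE in the self-similar variable $t$ is the same starting point as the paper, and your identification of the boundary condition $v(0,y)=0$ for $y>0$ with $\gk{k}(+\infty)=0$ is correct. However, the schematic form you write down is wrong in a way that matters: the actual ODE (equation \eqref{eq:EDO-G} in the paper) reads
\[
\gl'' + \Big(\frac{t^2}{3} + \frac{2\lambda t}{1+t^2}\Big)\gl' + \lambda\Big(-\frac{t}{3(1+t^2)} + \frac{1+(\lambda-1)t^2}{(1+t^2)^2}\Big)\gl = 0,
\]
so the first-order coefficient grows like $t^2/3$ and $t=\pm\infty$ is an \emph{irregular} singular point, not an Euler-type one. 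In particular, as $t\to+\infty$ \emph{all} solutions are bounded (both tend to finite limits), whereas as $t\to-\infty$ the generic solution blows up like $e^{|t|^3/9}$. Hence your dimension count ``bounded-at-$+\infty$ forms a line, bounded-at-$-\infty$ forms a line, and these intersect'' is not the right picture: the one-dimensional space of solutions bounded at $-\infty$ always exists and is always bounded at $+\infty$, but its limit there is a specific number depending on $\lambda$, and the whole content of the lemma is that this limit \emph{vanishes precisely when} $\lambda=\tfrac12+3k$. Your proposed closed form $\gk{0}(t)=c\int_t^{+\infty}e^{-\xi^3/9}\,d\xi$ illustrates the problem: this integral diverges as $t\to-\infty$, so it does not give a bounded profile.

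This connection/quantization step is the genuine gap in your proposal. The paper does not argue by asymptotics or dimension counting; instead it makes the explicit substitution $\gl(t)=(1+t^2)^{-\lambda/2}W(-t^3/9)$, which converts the ODE into Kummer's confluent hypergeometric equation $\zeta W''+(\tfrac23-\zeta)W'+\tfrac{\lambda}{3}W=0$. The solution bounded at $t=-\infty$ is then built from Tricomi's function $U(-\lambda/3,\tfrac23,\zeta)$, whose asymptotics on both ends of the real axis are tabulated; computing the limit at $t\to+\infty$ (which involves the branch of $U$ on the negative real axis) yields a factor $\cos\big(\pi(1+\lambda)/3\big)$, and this vanishes exactly for $\lambda\in\tfrac12+3\Z$. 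Without an explicit device of this kind --- reduction to a classical special function, or an equivalent direct computation of the connection coefficient --- there is no mechanism in your outline that distinguishes $\lambda=\tfrac12+3k$ from any other value.
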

 
 \begin{proof}[Sketch of proof]
 Using the formulas
 	\begin{align}
 		\label{eq:px-pr-pt}
 		\p_x  & = \frac{(1+t^2)^{\frac12}}{3r^2}\p_r -\frac{t(1+t^2)^{\frac32}}{3r^3}\p_t ,\\
 		\label{eq:pz-pr-pt}
 		\p_y  & = \frac{t}{(1+t^2)^{\frac12}} \p_r  + \frac{(1+t^2)^{\frac12}}{r}\p_t .
 	\end{align}
 	we first obtain an ODE on the function $\gl=\gk{k}$, namely
 		\begin{equation} \label{eq:EDO-G}
 		\d_t^2 \gl(t) + \left( \frac{t^2}{3} + \frac{2 \lambda t}{1+t^2} \right) \d_t \gl(t) + \lambda \left( - \frac{1}{3} \frac{t}{1+t^2} + \frac{1+(\lambda-1) t^2}{(1+t^2)^2} \right) \gl(t) = 0,
 	\end{equation}
 	where $\lambda= \frac{1}{2} + 3k$.
 	Changing variables and setting $\gl=  (1+t^2)^{-\frac \lambda 2} W(-t^3/9)$, we find that $W$ is a solution of Kummer's equation  with $a = - \frac \lambda 3$ and $b = \frac 2 3$:
 	\begin{eqnarray} \label{eq:EDO-W}
 		\zeta \p_\zeta^2 W(\zeta) + \left(\frac 23 - \zeta\right) \p_\zeta W(\zeta) - \left(-\frac \lambda 3\right) W(\zeta) = 0.
 	\end{eqnarray}
 	It is known (see \cite[Section 13.2]{NIST}) that \eqref{eq:EDO-W} has a unique solution behaving like $\zeta^{-a}$ as $\zeta \to \infty$.
 	This (complex valued) solution is usually denoted by $U(a,b,\zeta)$ and called \emph{confluent hypergeometric function of the second kind}, or \emph{Tricomi's function}.
 	In general, $U$ has a branch point at $\zeta = 0$.
 	More precisely, the asymptotic $\zeta^{-a}$ holds in the region $|\arg \zeta| < \frac {3\pi} 2$ and the principal branch of $U(a,b,\zeta)$ corresponds to the principal value of $\zeta^{-a}$.
 	
 	Using explicit formulas for $U(a,b,\zeta)$ and choosing
 	\begin{equation}
 		\label{eq:def-W-zeta}
 		W(\zeta) := \Re \left\{ e^{\frac{i\pi}{3}}  U\left(-\frac\lambda 3,\frac 2 3, \zeta\right) \right\} ,
 	\end{equation}
 	we find that $\gl$ has the desired behavior at $\pm \infty$ iff $\lambda \in \frac{1}{2} + 3 \Z$.
 	\end{proof}

 	The function $v_0$ is linked with a solution to \eqref{eq:shear} which has $Z^0$ regularity, but does not belong to $H^1_x H^1_y$.
 	Similarly, for each $k \geq 0$, $v_k$ is linked with a solution $u$ such that $\p^k_x u \in Z^0(\Omega)$ but $u \notin H^{k+1}_x H^1_y$.
 	  More precisely, we now introduce singular profiles $\busing^i$, for $i=0,1$, localized in the vicinity of $(x_i,0)$.
 Let $\chi_i \in C^\infty(\overline{\Om})$ be a cut-off function such that $\chi_i \equiv 1$ in a neighborhood of $(x_i, 0)$, and $\supp \chi \subset B((x_i, 0), \bar{R})$ for some $\bar{R} < \min (1, x_1-x_0)/2$. 
  
 \begin{definition}[Singular profiles]
 	\label{def:sing-profiles}
 	For $i \in \{0,1\}$, let
 	 	\be
 	\busing^i(x,y) := r_i^{\frac 12} \gnot(t_i) \chi_i(x,y),
 	\ee
 	where $\gnot$ is constructed in Proposition \ref{prop:rGk} and
  	\be\label{def:rj}
 	r_i := \left(y^2 + |x-x_i|^{\frac 23}\right)^{\frac 12}
 	\quad \text{and} \quad
 	t_i := (-1)^i y |x-x_i|^{-\frac 13}.
 	\ee
 \end{definition}
 
 \begin{lemma} \label{lem:busing}
 	For $i \in \{ 0, 1\}$, there exists $\bfi \in C^\infty(\overline{\Om})$, with $\bfi \equiv 0$ in neighborhoods of $(x_i,0)$ and $\{ y = \pm 1 \}$, such that $\busing^i$ is the unique solution with $Z^0(\Omega)$ regularity to
 	\begin{equation}
 		\label{eq:busing}
 		\begin{cases}
 			y \p_x \busing^i - \p_{yy} \busing^i=\bfi,\\
 			\busing^i {}_{|\Sigma_0 \cup \Sigma_1}=0,\\
 			\busing^i {}_{|z=\pm 1}=0.
 		\end{cases}
 	\end{equation}
 	Moreover, $\busing^i \in C^\infty(\overline{\Omega} \setminus \{ (x_i,0) \})$ but $\busing^i \notin H^1_x(H^1_y)$.
 \end{lemma}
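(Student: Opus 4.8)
The plan is to turn the exact homogeneous solution $v_0 = r^{1/2}\gnot(t)$ of Lemma~\ref{prop:rGk} into a genuine solution of the inhomogeneous boundary value problem~\eqref{eq:busing} by localization, and then identify the error term it produces. First I would translate $v_0$ to be centered at $(x_i,0)$ and reflect in $x$ when $i=1$: with $r_i,t_i$ as in~\eqref{def:rj}, the function $r_i^{1/2}\gnot(t_i)$ solves $y\p_x v - \p_{yy}v = 0$ in $\Om\setminus\{(x_i,0)\}$ (for $i=1$ one checks that $(x,y)\mapsto(2x_i-x,y)$ maps the operator $y\p_x-\p_{yy}$ to $-y\p_x-\p_{yy}$, which is the operator whose relevant solution is $\gnot(-t)$ by the $\gnot(-\infty)=1,\gnot(+\infty)=0$ normalization; a brief parity bookkeeping fixes the sign). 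Multiplying by the cutoff $\chi_i$ yields $\busing^i$, which by construction satisfies the boundary conditions on $\Sigma_0\cup\Sigma_1$ and on $\{y=\pm1\}$, since $\chi_i\equiv 1$ near $(x_i,0)$ forces $\busing^i$ to vanish identically on $\Sigma_0\cup\Sigma_1$ (the cutoff support avoids $x=x_{1-i}$, and near $x=x_i$ the trace on $\Sigma_i$ vanishes because $r_i^{1/2}\gnot(t_i)$ restricted to $x=x_i$, $\mathrm{sgn}(y)=(-1)^{1-i}$, is $|y|^{1/2}\gnot(\pm\infty)\cdot\{0\text{ or }1\}$ — here I would invoke the precise boundary condition $v(x=0,y)=0$ for $y>0$ from Lemma~\ref{prop:rGk} to get the vanishing on the inflow half-line, and separately note the profile is smooth up to the outflow half-line).

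Next I would compute the source term $\bfi := y\p_x\busing^i - \p_{yy}\busing^i$. Since $r_i^{1/2}\gnot(t_i)$ is annihilated by the operator away from $(x_i,0)$, Leibniz gives $\bfi = [y\p_x\chi_i - \p_{yy}\chi_i]\,r_i^{1/2}\gnot(t_i) - 2\p_y\chi_i\,\p_y\!\big(r_i^{1/2}\gnot(t_i)\big)$, which is supported in $\supp(\nabla\chi_i)$, hence bounded away from $(x_i,0)$ and from $\{y=\pm1\}$. On that region $r_i^{1/2}\gnot(t_i)$ is a smooth function of $(x,y)$ (the only non-smoothness of the homogeneous profile is at the origin $(x_i,0)$), so $\bfi\in C^\infty(\overline\Om)$ and vanishes near $(x_i,0)$ and near $\{y=\pm1\}$, as claimed. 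For the $Z^0$ regularity of $\busing^i$ I would check directly from the explicit form that $r_i^{1/2}\gnot(t_i)\in L^2$, that $\p_y$ and $\p_{yy}$ of it are in $L^2$, and that $y\p_x$ of it is in $L^2$ near $(x_i,0)$ — the borderline scaling is exactly the content of $v_0$: in the $(r_i,t_i)$ coordinates one has $\busing^i\sim r_i^{1/2}$, $\p_y\busing^i\sim r_i^{-1/2}$, $\p_{yy}\busing^i\sim r_i^{-3/2}$ and $y\p_x\busing^i\sim r_i^{-3/2}$, and with the area element $\dd x\,\dd y\sim r_i^{2}\,\dd r_i\,\dd t_i$ (from $x-x_i\sim r_i^3$) one gets $\int r_i^{-3}\cdot r_i^{2}\,\dd r_i<\infty$, so all four quantities are square-integrable; away from $(x_i,0)$ everything is smooth. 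Uniqueness in $Z^0$ then follows from the uniqueness statement of Baouendi–Grisvard (Proposition cited via \cite{BG}), since $Z^0\hookrightarrow L^2((x_0,x_1);H^1_0)$ and $\busing^i$ is the difference of two $Z^0$ solutions iff it solves the homogeneous problem, which forces it to be $0$.

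Finally, the non-membership $\busing^i\notin H^1_x(H^1_y)$: this is where the one real obstacle lies, and I expect it to be the main point to get right. The idea is that if $\busing^i\in H^1_xH^1_y$ then $\p_x\busing^i\in L^2_xH^1_y$, so $\p_x\busing^i$ is (at least) finite in $L^2$; but in the $(r_i,t_i)$ variables $\p_x\big(r_i^{1/2}\gnot(t_i)\big)$ scales like $r_i^{-5/2}$ near $(x_i,0)$ (apply~\eqref{eq:px-pr-pt}: $\p_x$ costs $r^{-2}$, so $\p_x r^{1/2}\sim r^{-3/2}$ and $\p_x$ of the $t$-dependence, $r^{-3}\p_t$ acting on $\gnot(t)$, is worse — it is $\sim r^{-3}\cdot r^{1/2}\cdot r^{-1/2}$... let me instead argue via $\p_y\busing^i$ directly, which is cleaner). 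Concretely, differentiating the explicit profile shows $\p_y\busing^i$ behaves like $r_i^{-1/2}$ near the corner, which sits in $L^2$, but $\p_x\p_y\busing^i$ — equivalently $\p_{yy}\busing^i$ adjusted by the equation — is not, because $\p_x$ applied once more crosses the integrability threshold: $\int_{|x-x_i|\le\varepsilon}\big(\p_x\p_y\busing^i\big)^2\,\dd x\,\dd y = +\infty$. I would make this rigorous by isolating the leading singular part: near $(x_i,0)$, $\busing^i = r_i^{1/2}\gnot(t_i) + (\text{smooth})$, write $r_i^{1/2}\gnot(t_i)$ in self-similar form, differentiate, and use the lower bound $|\gnot|\ge c>0$ on, say, $t_i\le 0$ together with the non-degeneracy of $\gnot$ (from the ODE~\eqref{eq:EDO-G}, $\gnot$ is real-analytic and not identically constant, so $\p_t\gnot\not\equiv 0$) to produce a genuinely divergent integral. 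The estimate is a one-variable computation once the scaling $x-x_i\sim r_i^3$, $y\sim r_i t_i$ is substituted; the only care needed is to check that the smooth remainder and the cutoff derivatives contribute only $L^2$-finite terms, so that the divergence cannot be cancelled.
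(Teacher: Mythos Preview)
Your approach is essentially the same as the paper's: localize the homogeneous solution $v_0$ with the cutoff, compute the commutator to get $\bfi$ supported in $\supp\nabla\chi_i$ (hence smooth and vanishing near $(x_i,0)$ and $\{y=\pm1\}$), and check $Z^0$ membership and non-membership in $H^1_xH^1_y$ by scaling in the $(r_i,t_i)$ variables.

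One computational slip to fix: the area element is $\dd x\,\dd y = \dfrac{3r_i^{3}}{(1+t_i^2)^{2}}\,\dd r_i\,\dd t_i$, not $\sim r_i^{2}\,\dd r_i\,\dd t_i$. With your exponent $2$ your own integrability check $\int_0 r_i^{-3}\cdot r_i^{2}\,\dd r_i$ actually diverges at $r_i=0$, so the argument as written does not prove $Z^0$. With the correct $r_i^{3}$ factor one gets $\int_0 r_i^{-3}\cdot r_i^{3}\,\dd r_i<\infty$, and the $(1+t_i^2)^{-2}$ weight is what makes the $t_i$-integral converge at infinity (note e.g.\ $y\p_x\busing^i = O\big(r_i^{-3/2}(1+|t_i|)\big)$, so the $t$-growth must be absorbed).

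For the failure of $H^1_xH^1_y$, the paper's route is slightly cleaner than yours: apply \eqref{eq:px-pr-pt} directly to get $\p_x\busing^i = r_i^{-5/2}H_i(t_i) + L^2$ with $H_i\not\equiv 0$, and since $\int_0 r_i^{-5}\cdot r_i^{3}\,\dd r_i=+\infty$ one concludes $\busing^i\notin H^1_xL^2_y$, which is stronger than $\notin H^1_xH^1_y$ and avoids the detour through $\p_x\p_y$.
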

\textit{\textbf{Sketch of proof}}
By definition of $\busing^i$, the function $f_i$ is supported in $\supp \nabla \chi_i$. Furthermore there exists $r_\pm>0$ such that $\supp \nabla \chi_i \subset \{ r_-\leq r \leq r_+\}$. Hence the definition of $\bfi$ follows from straightforward computations, and its regularity is a consequence of the  smoothness of $v_0$ away from the origin.

There remains to check that  $\busing^0$, $\p_{yy} \busing^0$ and $z \p_x \busing^0$ are in $L^2(\Omega)$ but $\p_x \p_y \busing^0 \notin L^2(\Omega)$. We use  once again the formulas \eqref{eq:px-pr-pt}-\eqref{eq:pz-pr-pt}, together with the observation that the jacobian of the change of variables $(x,z)\to (r,t)$ is ${(1+t^2)^2}/(3r^3)$. We infer that
\[
y\p_x \busing^i= O( r_i^{-3/2} (1+|t_i|)),
\]
and therefore $y\p_x \busing^i \in L^2$. In a similar fashion $\p_{yy} \busing^i \in L^2$.  However
\[
\p_x \busing^i \in  r_i^{-5/2} H_i(t)  + L^2
\]
for some non-zero function $H_i$, and therefore $\busing^i \notin H^1_x L^2_y$.
\qed

\begin{corollary}[Decomposition into singular profiles]
	\label{coro:decomposition-profile}
	Let     $f\in  H^1_xH^1_y\cap L^2_xH^3_y$ be arbitrary, and let $\delta_0, \delta_1 \in H^1(-1,1)$ with $\delta_0(1)=\delta_1(-1)=0$, and $\Delta_0, \Delta_1 \in H^1(-1,1)$ with $\Delta_0(1)=\Delta_1(-1)=0$.
	Assume furthermore that $\Delta_i \in H^2(\Sigma_i\cap \{|y|\geq 1/2\})$.

	Let $u\in Z^0(\Omega)$ be the unique weak solution to \eqref{eq:shear}. 
	Then there exists two real constants $c_0, c_1$ and a function $\ureg\in \qone$, as defined in \eqref{def:Q1}, such that 
	\be 
	u= c_0 \busing^0 + c_1 \busing^1 + \ureg.
	\ee
\end{corollary}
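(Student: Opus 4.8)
The plan is to reduce the statement to the combination of the orthogonality-condition analysis (the linear forms $\overline{\ell^j}$) and the explicit singular profiles $\busing^i$ constructed above. First I would compute, for $i,j \in \{0,1\}$, the values $\overline{\ell^j}(\bfi, 0, 0)$, where $\bfi$ is the smooth source term attached to $\busing^i$ in Lemma \ref{lem:busing}. The key point is that $\busing^i$ solves \eqref{eq:shear} with data $(\bfi,0,0)$ and has $Z^0$ regularity but fails to lie in $H^1_x H^1_y$; by the jump characterization derived just before Definition \ref{lem:def-dual-shear} together with the behavior of $v_0 = r^{1/2}\gnot(t)$ near the origin, the $2\times 2$ matrix $M_{ji} := \overline{\ell^j}(\bfi,0,0)$ is invertible (indeed the two singular profiles $\busing^0,\busing^1$ are localized near distinct points $(x_0,0)$ and $(x_1,0)$, and the profile $v_0$ contributes a nonzero jump of exactly the type measured by $\overline{\ell^0}$, $\overline{\ell^1}$; a short computation using \eqref{eq:px-pr-pt}--\eqref{eq:pz-pr-pt} pins down the entries).

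Given invertibility of $M$, the construction is immediate: for the data $(f,\delta_0,\delta_1)$ satisfying the hypotheses, set
\begin{equation}
	\begin{pmatrix} c_0 \\ c_1 \end{pmatrix} := M^{-1} \begin{pmatrix} \overline{\ell^0}(f,\delta_0,\delta_1) \\ \overline{\ell^1}(f,\delta_0,\delta_1) \end{pmatrix},
\end{equation}
and define $\ureg := u - c_0 \busing^0 - c_1 \busing^1$. By linearity of \eqref{eq:shear} and of the forms $\overline{\ell^j}$, the function $\ureg$ is the unique $Z^0$ weak solution to \eqref{eq:shear} with data $(f - c_0 \bfzero - c_1 \bfun, \delta_0, \delta_1)$, and this modified data satisfies $\overline{\ell^0} = \overline{\ell^1} = 0$ by the very choice of $c_0,c_1$. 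Hence by the Proposition on orthogonality conditions for higher regularity, $\p_x \ureg \in Z^0(\Omega)$, which (combined with $Z^0 \hookrightarrow \qnot$) already gives $\ureg \in H^{5/3}_x L^2_y$ after bootstrapping the $x$-regularity. It remains to upgrade the $y$-regularity to $H^5_y$: here I would invoke Lemma \ref{prop:dy-5-shearflow}, checking that the modified data still meets its hypotheses — this is where the localized smoothness of $\bfi$ away from $(x_i,0)$ and the assumption $\Delta_i \in H^2(\Sigma_i \cap \{|y|\ge 1/2\})$ are used, noting that $\bfi \equiv 0$ near $\{y = \pm 1\}$ and near $(x_i,0)$ so it does not spoil the weighted norms. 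This yields $\ureg \in L^2_x H^5_y \cap H^{5/3}_x L^2_y = \qone$.

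The main obstacle I expect is the explicit verification that the matrix $M = (\overline{\ell^j}(\bfi,0,0))_{i,j}$ is invertible, i.e.\ that the two singular profiles are genuinely independent and ``detected'' by the two orthogonality functionals. Morally this is clear — $\busing^0$ and $\busing^1$ live near different corners, so $M$ should be (block) diagonal with nonzero diagonal entries governed by the jump data $[\,\cdot\,]_{|y=0}$ and $[\p_y \,\cdot\,]_{|y=0}$ of $v_0$ — but making it rigorous requires tracking the precise asymptotics of $v_0 = r^{1/2}\gnot(t)$ and its $x$-derivative near the origin (the $r^{-5/2}H_i(t)$ singular part identified in the sketch of Lemma \ref{lem:busing}) and pairing it against the dual profiles $\overline{\Phi^0}, \overline{\Phi^1}$ of Definition \ref{lem:def-dual-shear} via the identity $\overline{\ell^j}(f,\delta_0,\delta_1) = \p_y^j(\delta_0-\delta_1)(0) + \int_\Omega \p_x f\, \overline{\Phi^j} + \int_{\Sigma_0} y\Delta_0 \overline{\Phi^j} - \int_{\Sigma_1} y\Delta_1 \overline{\Phi^j}$. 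A secondary technical point is ensuring the hypotheses of Lemma \ref{prop:dy-5-shearflow} are inherited by the corrected data, in particular the $H^2$-regularity of $\Delta_i$ away from $y=0$ and the vanishing conditions at $y = \pm 1$ — but these follow from $\bfi$ being smooth and supported away from both $(x_i,0)$ and $\{y=\pm1\}$.
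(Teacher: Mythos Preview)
Your approach is the same as the paper's: set $\ureg := u - c_0\busing^0 - c_1\busing^1$ with $(c_0,c_1)$ chosen via $M^{-1}$ so that the corrected data kills both $\overline{\ell^j}$, then bootstrap regularity. The one place you diverge is the invertibility of $M=(\overline{\ell^j}(\bfi,0,0))$, which you flag as the main obstacle and propose to attack by explicit asymptotic computation of the entries. The paper avoids all of that with a soft argument that uses only what is already on the table: if $Ma=0$ for some $a=(a_0,a_1)\in\R^2$, then the data $(a_0\bfzero+a_1\bfun,0,0)$ satisfies both orthogonality conditions, so by the Proposition its unique $Z^0$ solution --- which is precisely $a_0\busing^0+a_1\busing^1$ --- lies in $H^1_xH^1_y$; since $\busing^0$ and $\busing^1$ have disjoint supports this forces each $a_i\busing^i\in H^1_xH^1_y$, and Lemma~\ref{lem:busing} then gives $a_0=a_1=0$. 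No asymptotics, no pairing against $\overline{\Phi^j}$, no tracking of the $r^{-5/2}H_i(t)$ singular part. Your discussion of the $\qone$ bootstrap via Lemma~\ref{prop:dy-5-shearflow} is in fact more explicit than the paper's own proof of the corollary, which stops once $(c_0,c_1)$ is determined.
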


\begin{proof}
The idea is to adjust the coefficients $c_0$ and $c_1$ in order that the data (source term and boundary conditions) for the remainder $\ureg$ satisfies the orthogonality conditions.
More precisely, we note that $\ureg$ is a solution of
\begin{equation}
	\begin{cases}
		y\p_x \ureg- \p_{yy}\ureg = f-c_0 \bfzero-c_1 \bfun & \text{in }\Om,\\
		u_{\reg|\Sigma_i}=\delta_i,\\
		u_{\reg|y=\pm 1}=0.
	\end{cases}
\end{equation}
We therefore seek $(c_0, c_1)$ so that
\[
\overline{\ell^j} (f-c_0 \bfzero-c_1 \bfun , \delta_0, \delta_1 )=0 \quad \text{for }j=0,1.
\]
Let us first observe that the matrix $\overline{M}:=(\overline{\ell^i} (\bfj, 0,0))_{0\leq i,j\leq 1}$ is invertible. Indeed, if there exists $a=(a_0, a_1)\in \R^2$ such that $\overline{M} a=0$, then 
\[
\overline{\ell^i} (a_0 \bfzero + a_1 \bfun, 0,0)=0\quad \text{for }i=0,1.
\]
It follows from the above arguments that $a_0 \busing^0 + a_1 \busing^1 \in H^1_x L^2_y$. Since the supports of $\busing^0$ and $\busing^1$ are disjoint, we infer that $a_i \busing^i\in H^1_x L^2_y$, and therefore $a_0=a_1=0$. Hence the matrix $\overline{M}$ is invertible.

We then choose the coefficients $(c_0, c_1)$ so that
\[
\overline{M} \begin{pmatrix}
c_0\\ c_1
\end{pmatrix}
= 
\begin{pmatrix}
\overline{\ell^0} (f,\delta_0, \delta_1)\\
\overline{\ell^1} (f,\delta_0, \delta_1)
\end{pmatrix},
\]
which concludes the proof.
\end{proof}

\section{Extension to a broader class of degenerate elliptic equations}


Let us now extend briefly the results of the previous section to equations of the type
\begin{equation} \label{eq:coeff-variables}
	\begin{cases}
		y \partial_x u + \gamma \p_y u- \alpha \partial_{yy} u = f 
		& \text{in } \Omega, \\
		u_{\rvert \Sigma_i} = \delta_i, \\
		u_{\rvert y = \pm 1} = 0.
	\end{cases}
\end{equation}
For the applications we have in mind, the coefficient $\gamma$ will be small (in a norm which will be made precise shortly), and the coefficient $\alpha$ is close to $1$, and therefore keeps a positive sign within $\Om$.
We follow the outline of the previous section: we first state an existence and uniqueness result for weak solutions in $Z^0$. Then, we exhibit orthogonality conditions for higher regularity. 
Eventually, stepping on the analysis of the previous paragraph, we provide a decomposition of the solution associated with smooth data into a sum of singular profiles and a smooth remainder.

\subsection{\texorpdfstring{$Z^0$}{Z0} regularity for weak solutions}

We start with the following result:

\begin{lemma}\label{lem:Z0-coeffs-variables}
 Assume that $\alpha$ and $\gamma$ satisfy
\begin{equation}
	\label{cond-U-alpha-gamma}
	\| \alpha - 1 \|_{L^\infty} 
	+ \|\alpha_y\|_{L^2_y(L^\infty_x)}
	+ \|\gamma\|_{L^2_y(L^\infty_x)} 
	\ll 1.
\end{equation}

Then the following results hold:

\begin{itemize}
\item For every $f\in L^2_x H^{-1}_y$ and $\delta_i \in L^2(\Sigma_i, |y| \dd y)$, there exists a unique weak solution $u\in L^2((x_0,x_1), H^1_0(-1,1))$ to \eqref{eq:coeff-variables}.

\item Let $f\in L^2(\Om)$ and let $\delta_i \in H^1(\Sigma_i, |y| \dd y)$ such that $\delta_0(1)=\delta_1(-1)=0$. Let $u\in L^2_x H^1_y$ be the unique solution of \eqref{eq:coeff-variables}. Then $u\in Z^0$ and
\[
\|u\|_{Z^0} \lesssim C (\| f\|_{L^2} + \|\delta_0\|_{H^1(\Sigma_0, |y|\dd y)} + \|\delta_1\|_{H^1(\Sigma_1, |y|\dd y)} ),
\]
where $C$ is a constant depending only on $\Om$.

\end{itemize}

\end{lemma}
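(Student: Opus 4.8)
The plan is to prove the two assertions of Lemma~\ref{lem:Z0-coeffs-variables} by a perturbation argument off the constant-coefficient case $\alpha\equiv 1$, $\gamma\equiv 0$, exactly as for the shear flow, using a Lax--Milgram / Lions-type argument for the first point and an energy estimate for the second. For the \emph{existence and uniqueness of weak solutions}, I would mimic the classical Fichera--Baouendi--Grisvard framework: introduce the bilinear form
\[
a(u,v) := -\int_\Om y\,u\,\p_x v + \int_\Om \alpha\,\p_y u\,\p_y v - \int_\Om \gamma\,v\,\p_y u
\]
on a suitable subspace of $L^2_x H^1_{0,y}$, and the linear form $\langle F,v\rangle := \int_\Om f v + \int_{\Sigma_0} y\delta_0 v - \int_{\Sigma_1} y\delta_1 v$. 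The antisymmetric part of the transport term $-\int y\,u\,\p_x v$ produces, after integration by parts, a boundary term $\tfrac12\int_{\partial\Om} y\,n_x\,uv$ which is nonnegative precisely on the ``outflow'' parts of $\{x=x_0\}\cap\{y<0\}$ and $\{x=x_1\}\cap\{y>0\}$ where no boundary condition is prescribed, and vanishes on $\Sigma_0\cup\Sigma_1$ by construction; together with the coercive $\int \alpha|\p_y u|^2 \geq (1-\|\alpha-1\|_{L^\infty})\|\p_y u\|_{L^2}^2$ term this gives coercivity once $\|\gamma\|_{L^2_y(L^\infty_x)}$ is absorbed via Cauchy--Schwarz and the one-dimensional Poincaré inequality in $y$ (using $u\in H^1_0$ in $y$, so $\|u\|_{L^2}\lesssim\|\p_y u\|_{L^2}$). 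Smallness condition~\eqref{cond-U-alpha-gamma} is exactly what is needed to keep the form coercive; the lifting of the nonhomogeneous data $\delta_i$ is handled as in the shear case (the weighted $L^2(\Sigma_i,|y|\dd y)$ regularity is what makes the boundary integral $\int_{\Sigma_i} y\delta_i v$ continuous on the energy space). Uniqueness follows from coercivity applied to the difference of two solutions with zero data.

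For the \emph{$Z^0$ regularity}, the strategy is to write the equation as a perturbation of the shear operator: $y\p_x u - \p_{yy} u = f + (1-\alpha)\p_{yy} u - \gamma\p_y u =: \tilde f$, and to apply Proposition~\ref{p:pagani-shear} to $u$ with source term $\tilde f$, \emph{provided} one first knows $\tilde f\in L^2(\Om)$ — which is not immediate since a priori we only have $u\in L^2_x H^1_y$, so $\p_{yy}u$ need not be in $L^2$. The standard way around this is a regularization/a priori-estimate argument: establish the estimate
\[
\|u\|_{Z^0} \lesssim \|f\|_{L^2} + \|\delta_0\|_{H^1(\Sigma_0,|y|\dd y)} + \|\delta_1\|_{H^1(\Sigma_1,|y|\dd y)} + \|(\alpha-1)\p_{yy}u\|_{L^2} + \|\gamma\p_y u\|_{L^2}
\]
from Pagani's result applied to $\tilde f$, then control the last two terms by $\|\alpha-1\|_{L^\infty}\|\p_{yy}u\|_{L^2} + \|\gamma\|_{L^2_y(L^\infty_x)}\|\p_y u\|_{L^\infty_y L^2_x}$ and absorb them into $\|u\|_{Z^0}$ using~\eqref{cond-U-alpha-gamma} (here $\|\p_y u\|_{L^\infty_y L^2_x}$ is controlled by $\|u\|_{Z^0}$ via a trace/interpolation inequality in the $y$ variable: from $\p_y u, \p_{yy}u\in L^2$ one gets $\p_y u\in L^\infty_y L^2_x$). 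To make this rigorous one runs it on a Galerkin or vanishing-viscosity approximation (e.g.\ adding $\eps\p_{xx}$ to make the problem genuinely elliptic, or mollifying $\alpha,\gamma$ and the data) where all quantities are a priori smooth, derive the estimate uniformly in the approximation parameter, and pass to the limit; the limit is the weak solution by uniqueness from the first point.

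The main obstacle I expect is the \textbf{absorption argument} in the second point — specifically, controlling $\|\gamma\,\p_y u\|_{L^2(\Om)}$ by a \emph{small} multiple of $\|u\|_{Z^0}$. The naive bound $\|\gamma\,\p_y u\|_{L^2}\leq \|\gamma\|_{L^\infty}\|\p_y u\|_{L^2}$ would require $\gamma$ small in $L^\infty(\Om)$, whereas the hypothesis only gives smallness of $\|\gamma\|_{L^2_y(L^\infty_x)}$; this is why the anisotropic norm $\|\gamma\|_{L^2_y(L^\infty_x)}$ appears in~\eqref{cond-U-alpha-gamma}, and the correct estimate is $\|\gamma\,\p_y u\|_{L^2(\Om)} \leq \|\gamma\|_{L^2_y(L^\infty_x)}\,\|\p_y u\|_{L^\infty_y(L^2_x)}$, the latter factor being $\lesssim\|u\|_{Z^0}$ by a one-dimensional Sobolev embedding in $y$ applied to the $L^2_x$-valued function $\p_y u$. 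A similar care is needed for the $\alpha_y$ term if one integrates by parts to move a derivative off $\p_{yy}u$; the cleanest route is to not integrate by parts at all and use $\|\alpha-1\|_{L^\infty}$ directly on $\p_{yy}u$, reserving $\|\alpha_y\|_{L^2_y(L^\infty_x)}$ for the coercivity estimate in the first point (where $\int \alpha_y\,\p_y u\, u$-type cross terms arise). Everything else — the weak-formulation bookkeeping, the lifting of boundary data, the passage to the limit in the approximation — is routine and parallels Section~\ref{sec:shear} essentially verbatim.
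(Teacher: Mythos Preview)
Your overall strategy matches the paper's: weak existence via a Lax--Milgram/Lions argument treating the $\gamma$ and $(\alpha-1)$ contributions perturbatively, and the $Z^0$ bound by rewriting the equation as $y\p_x u-\p_{yy}u=f+(\alpha-1)\p_{yy}u-\gamma\p_y u$ and absorbing the last two terms via Pagani's estimate, using exactly the anisotropic product inequality you identify. One small point: your ``uniqueness from coercivity'' is too quick. The weak solution lives only in $L^2_xH^1_{0,y}$, so you cannot legally test the equation against $u$ itself (the term $-\int y\,u\,\p_x u$ is not defined). The paper does not argue uniqueness this way; it invokes the Baouendi--Grisvard theorem~\cite{BG}, which supplies uniqueness for the degenerate operator by a duality argument that does not require extra $x$-regularity of the solution.

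The more substantive difference is in how one \emph{justifies} the $Z^0$ a priori estimate. You propose to run the absorption argument on a Galerkin or $\eps\p_{xx}$-regularized approximation and pass to the limit. This is plausible but not innocuous: adding $\eps\p_{xx}$ destroys the structure to which Proposition~\ref{p:pagani-shear} applies, so you would have to rederive a Pagani-type $Z^0$ estimate for the elliptic regularization uniformly in $\eps$, and mollifying $\alpha,\gamma$ alone does not resolve the bootstrap (you still do not know $\p_{yy}u\in L^2$). The paper takes a different route that sidesteps this: first reduce to smooth $\alpha,\gamma$ by a compactness argument, then observe that the only issue is the \emph{existence} of a $Z^0$ solution (once it exists, your absorption estimate closes immediately). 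For smooth coefficients, existence in $Z^0$ is obtained by adding a large positive zero-order term $+Mu$ to the left-hand side, which places the equation within the scope of Pagani's variable-coefficient result~\cite[Theorem~5.2]{Pagani2}, and then removing the zero-order term by a Fredholm argument. This is cleaner than your approximation scheme because it leverages Pagani's machinery directly rather than reproving it for a perturbed operator; your route could be made to work, but the paper's Fredholm detour is the sharper tool here.
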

\begin{proof}
The existence of solutions in $L^2_x H^1_y$ follows easily from energy estimates and from the Lax--Milgram lemma, treating the transport and commutator terms perturbatively. Uniqueness is a consequence of the Baouendi--Grisvard theorem \cite{BG}. 

The $Z^0$ estimate is a  bit trickier. Using a compactness argument, it is sufficient to prove the statement for smooth coefficients satisfying \eqref{cond-U-alpha-gamma}. Then, we observe that the main issue is to prove the existence of a $Z^0$ solution. Indeed, if a solution $u\in Z^0$ exists, we write
\[
y \p_x u - \p_{yy} u = f + (\alpha-1)\p_{yy} u - \gamma \p_y u.
\]
We deduce from Proposition \ref{p:pagani-shear} that there exists a universal constant $\bar C$ such that
\[
\ba
\|u\|_{Z^0}\leq &\bar C  \Big(\| f\|_{L^2} + \|\delta_0\|_{H^1(\Sigma_0, |y|\dd y)} + \|\delta_1\|_{H^1(\Sigma_1, |y|\dd y)}  \\
&\qquad+ \|\alpha-1\|_\infty \|u\|_{Z^0} + \|\gamma\|_{L^2_y (L^\infty_x)} \|u\|_{Z^0}\Big).
\ea
\]
Therefore if $\|\alpha-1\|_{\infty} +  \|\gamma\|_{L^2_y (L^\infty_x)} \leq \bar C/4$, we obtain the desired estimate.

Whence it is sufficient to prove the existence of $Z^0$ solutions of \eqref{eq:coeff-variables} for smooth coefficients. We first add a large positive zero order term in the right-hand side. The existence of solutions then follows from \cite[Theorem~5.2]{Pagani2}. We then conclude by a Fredholm type argument. 
\end{proof}

\subsection{Orthogonality conditions for higher order regularity}

We now address the higher regularity theory. In order to simplify the presentation, we focus on the case when the coefficients $\alpha$ and $\gamma$ are smooth. 

We assume that $f\in H^1_x L^2_y$, $\delta_i \in H^2(\Sigma_i)$, and we set
\be
h_i :=\p_x f + \alpha_x \p_{zz}{\delta}_i -\gamma_x \p_y {\delta}_i,\label{def:h_i}
\ee
and
\be\label{def:Delta_i}
{\Delta}_i(y):=\frac{1}{y}\left(f(x_i,y) + \alpha(x_i,y) \p_{yy} {\delta}_i(y) - \gamma(x_i,y) \p_y {\delta}_i(y)\right).
\ee

Let $u\in Z^0$ be the solution of \eqref{eq:coeff-variables}. Assume that $\p_x u \in Z^0$. Then, differentiating \eqref{eq:coeff-variables} with respect to $x$ and setting $\Om_\pm:=\Om \cap \{y\gtrless 0\}$, we observe that $v=\p_x u$ is a solution of
\be\label{eq:V}
\begin{cases}
	y \p_x v
	+ \gamma \p_y v
	- \alpha \p_{yy} v -\alpha_x \p_{yy} \int_{x_0}^x v+ \gamma_x \p_y\int_{x_0}^x v  = h_0 & \text{in } \Om_+,\\
	y \p_x v
	+ \gamma \p_y v
	- \alpha \p_{yy} v +\alpha_x \p_{yy} \int^{x_1}_x v - \gamma_x \p_y\int^{x_1}_x v    = h_1 & \text{in } \Om_-
	, \\
	[v]_{y=0}=[\p_y v]_{y=0}=0 & \text{on } (x_0,x_1),\\
	v(x_0,y) = {\Delta}_0 & \text{for } y\in (0,1), \\
	v(x_1,y) = {\Delta}_1 & \text{for } y \in (-1,0), \\
	v(x,\pm 1) = 0 & \text{for } x \in (x_0,x_1),
\end{cases}
\ee
Reciprocally, assuming that the above system has a unique weak solution $v\in Z^0$. Then $u_1$ defined by
\be
u_1 := 
\begin{cases}
	{\delta}_0 + \int_{x_0}^x v & \text{in } \Om_+, \\
	{\delta}_1 + \int_{x_1}^x v & \text{in } \Om_-
\end{cases}
\ee
is a solution to \eqref{eq:U} if and only if $v$ satisfies 
\be\label{cond-V}
\ba
\int_{x_0}^{x_1} v(x,0)\dd x & = {\delta}_1(0)-{\delta}_0(0),\\
\int_{x_0}^{x_1} \p_y v(x,0)\dd x & = \p_y{\delta}_1(0)-\p_y {\delta}_0(0).
\ea
\ee
As in the previous section, we find that $\p_x u\in Z^0$ if and only if the data $(f,\delta_0,\delta_1)$ satisfy two orthogonality conditions.
More precisely, we prove the following result:

\begin{proposition}
 Let $\alpha\in H^1_x H^1_y \cap H^{3/5}_x H^2_y$.
Assume that there exists $\gamma_1\in H^{2/3}_xL^2_y\cap L^\infty_y(H^{1/2}_x) \cap W^{1,\infty}_y(L^2_x)$ and $\gamma_2\in H^1_x L^2_y \cap H^{3/5}_x H^1_y$ such that $\gamma=y\gamma_1 + \gamma_2$ and
\begin{equation}
	\label{eq:smallness-ag1g2}
	\begin{split}
		\|\alpha-1\|_{H^1_x H^1_y}  +\|\p_{yy} \alpha\|_{H^{3/5}_x L^2_y} & \ll 1,\\
		\|\gamma_1\|_{H^{2/3}_xL^2_y} + \|\gamma_1\|_{L^\infty_y(H^{1/2}_x)} + \| \p_y\gamma_1\|_{L^\infty_y(L^2_x)} & \ll 1,\\
		\|\gamma_2\|_{H^1_xL^2_y} + \|\p_y\gamma_2 \|_{H^{3/5}_x L^2_y} & \ll 1.
	\end{split}
\end{equation}

There exist two independent linear forms $\ell_{\alpha,\gamma}^0, \ell_{\alpha,\gamma}^1$, defined on $\cH$, such that the following result holds. 
Let $(f,\delta_0,\delta_1)\in \cH$.
Assume that 
${\delta_0}(1)={\delta_1}(-1)=0$, and that $\Delta_i \in L^2(\Sigma_i, |y|\dd y)$.

Let $u\in Z^0(\Omega)$ be the unique solution to \eqref{eq:U}. Then $u\in H^1_xH^1_y$ if and only if
\[
\ell_{\alpha,\gamma}^0(f,\delta_0,\delta_1)=\ell_{\alpha,\gamma}^1(f,\delta_0,\delta_1)=0,
\]
and in this case
\[
\|u\|_{H^1_xH^1_y}\lesssim \|f\|_{H^1_xL^2_y} + \|{\delta_i}\|_{H^2} + \| \Delta_i \|_{L^2(\Sigma_i, |y| \dd y)}.
\]

\label{prop:U-recap}

\end{proposition}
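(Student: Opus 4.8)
\textbf{Proof strategy for Proposition \ref{prop:U-recap}.}
The plan is to mimic the three-step argument carried out for the shear flow in Section \ref{sec:shear}, perturbatively treating the lower-order coefficients $\gamma$ and the deviation $\alpha-1$. First I would establish that the differentiated system \eqref{eq:V} is well-posed in $Z^0$: this is the analogue of Proposition \ref{p:pagani-shear}, but now the equation contains the extra nonlocal terms $\alpha_x \p_{yy}\int v$ and $\gamma_x \p_y \int v$. The idea is to view these as a perturbation: using the $Z^0$ estimate from Lemma \ref{lem:Z0-coeffs-variables} together with the elementary bound $\|\int_{x_0}^x v\|_{L^\infty_x H^2_y} \lesssim \|v\|_{L^2_x H^2_y} \lesssim \|v\|_{Z^0}$ and the smallness of $\|\alpha_x\|$, $\|\gamma_x\|$ coming from \eqref{eq:smallness-ag1g2} (here the splitting $\gamma = y\gamma_1 + \gamma_2$ is used to handle $\gamma_x$ near $\{y=0\}$, since $\gamma_{1,x}$ only controls $\gamma_x/y$ there), one absorbs the nonlocal contributions into the left-hand side and gets existence, uniqueness and an a priori estimate for $v \in Z^0$. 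The regularity hypotheses on $h_i$ and $\Delta_i$ (which follow from $(f,\delta_0,\delta_1) \in \cH$, the hypothesis $\Delta_i \in L^2(\Sigma_i,|y|\dd y)$, and the stated regularity of $\alpha,\gamma$) guarantee the data of \eqref{eq:V} lie in the admissible class.

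Next, as in the shear case, I would define the two linear forms $\ell^j_{\alpha,\gamma}$ by
\[
\ell^j_{\alpha,\gamma}(f,\delta_0,\delta_1) := \p_y^j\delta_0(0) - \p_y^j\delta_1(0) + \int_{x_0}^{x_1} \p_y^j v(x,0)\dd x,
\]
where $v \in Z^0$ is the solution of \eqref{eq:V}; the trace $\p_y^j v(\cdot,0)$ makes sense because $v \in Z^0 \subset \qnot$ gives $\p_y^j v_{|y=0} \in H^{1/6}(x_0,x_1)$ for $j=0,1$. The reconstruction identity displayed before the statement shows that $u_1 = \p_x u$ exists in $Z^0$ — equivalently $u \in H^1_x H^1_y$ — precisely when the jump conditions \eqref{cond-V} hold, i.e.\ when both $\ell^j_{\alpha,\gamma}$ vanish. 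The estimate on $\|u\|_{H^1_x H^1_y}$ then follows by combining the $Z^0$ bound on $v$ with the definition of $u_1$ and the control $\|u_1\|_{H^1_x H^1_y} \lesssim \|\delta_i\|_{H^2} + \|v\|_{L^2_x H^1_y}$. Linear independence of $\ell^0_{\alpha,\gamma}$ and $\ell^1_{\alpha,\gamma}$ should be obtained by a perturbation argument from the shear case: the associated $2\times 2$ matrix is a small perturbation (controlled by \eqref{eq:smallness-ag1g2}) of the invertible matrix $\overline M$ of Corollary \ref{coro:decomposition-profile}, hence invertible; equivalently, one can argue directly that if a nontrivial combination $a_0 \Phi^0 + a_1 \Phi^1$ of the (perturbed) singular profiles were in $H^1_x L^2_y$, disjointness of supports forces $a_0 = a_1 = 0$.

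The main obstacle I anticipate is the well-posedness of \eqref{eq:V} in $Z^0$, specifically controlling the nonlocal terms near the degeneracy line $\{y=0\}$. The term $\gamma_x \p_y \int_{x_0}^x v$ is delicate: naively $\|\gamma_x\|$ need not be small in a strong enough norm, which is exactly why the hypothesis splits $\gamma = y\gamma_1 + \gamma_2$. One must carefully exploit that $\gamma_x = y\gamma_{1,x} + \gamma_{2,x}$, pairing the factor $y$ with $\p_x$ in the degenerate operator (so that $y\gamma_{1,x}\p_y\int v$ is handled like a zeroth-order-in-$x$ perturbation after integration by parts in $x$), while $\gamma_{2,x} \in H^{3/5}_x H^1_y$ is genuinely small and lower order. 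A parallel subtlety arises for $\alpha_x \p_{yy}\int v$, which costs two $y$-derivatives on an object that is only one $x$-derivative smoother; here the anisotropic trade $L^2_x H^2_y \cap H^{2/3}_x L^2_y$ inherent in $Z^0$, combined with $\alpha_x \in H^{3/5}_x L^2_y \hookrightarrow L^2$ with small norm, is what makes the absorption work. Once \eqref{eq:V} is under control, the remaining steps are essentially bookkeeping transcriptions of Section \ref{sec:shear}.
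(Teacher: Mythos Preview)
Your high-level strategy---solve the differentiated system \eqref{eq:V}, define the linear forms via traces of $v$ at $y=0$, and invoke the reconstruction identity---matches what the paper does for \emph{smooth} coefficients, and the paper says exactly this. But the paper explicitly flags the rough-coefficient case as the real content of the proposition, and handles it by a different route: rather than solving \eqref{eq:V} directly, it argues by duality, constructing dual profiles $\Phi^j$ (variable-coefficient analogues of the $\overline{\Phi^j}$ of Definition~\ref{lem:def-dual-shear}) and representing $\ell^j_{\alpha,\gamma}$ as pairings against them. The point is that the $\Phi^j$ solve an equation whose well-posedness and continuous dependence on $(\alpha,\gamma)$ can be established at the regularity \eqref{eq:smallness-ag1g2} (this is precisely what underlies Lemma~\ref{lem:regularity-phi-j}), so the linear forms are defined and continuous without ever needing to give a rigorous meaning to \eqref{eq:V} for rough coefficients.

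Your direct route has a real gap at exactly the spot you anticipate. The hypotheses give $\gamma_1$ only $H^{2/3}_x$ or $L^\infty_y(H^{1/2}_x)$ regularity in $x$, so $\gamma_{1,x}$ is a genuine negative-order distribution in $x$ and the nonlocal term $y\gamma_{1,x}\p_y\int v$ in \eqref{eq:V} is not an $L^2$ source; Lemma~\ref{lem:Z0-coeffs-variables} then does not apply and you cannot obtain $v\in Z^0$ by absorption. Your proposed integration-by-parts fix pushes you to a weak formulation of \eqref{eq:V} in which $v$ is no longer obviously in $Z^0$---but you need exactly that (for the trace $\p_y v|_{y=0}\in L^1(x_0,x_1)$) to even write down your formula for $\ell^j_{\alpha,\gamma}$, so the argument becomes circular. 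The duality approach sidesteps this: one first constructs the $\Phi^j$ and defines the linear forms as pairings, and only afterwards interprets \eqref{eq:V} in a suitable weak sense, already knowing the orthogonality conditions make sense. (A minor slip as well: the hypotheses give $\alpha_x\in L^2_xH^1_y$, not $H^{3/5}_xL^2_y$; the $H^{3/5}_x$ assumption is on $\p_{yy}\alpha$, and differentiating in $x$ costs the full derivative.)
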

\begin{proof}[Sketch of proof]
When the coefficients $\alpha$ and $\gamma$ are smooth, the result is a straightforward consequence of the argument above. Therefore the issue is to extend both the definition of the linear forms and the notion of weak solution of \eqref{eq:V} to coefficients $\alpha,\gamma$ which merely have the regularity stated in the Proposition.
We argue by duality, identifying in particular the ``dual profiles'' $\Phi^j$ associated with the linear forms $\ell^j_{\alpha,\gamma}$. 
We refer to \cite{DMR} for all details.

\end{proof}

Furthermore, the linear forms $\ell^j_{\alpha,\gamma}$ depend smoothly on the coefficients:

\begin{lemma}
	\label{lem:regularity-phi-j}
	Let $(\alpha,\gamma)$ and $(\alpha',\gamma')$ be two sets of coefficients satisfying the assumptions of Proposition \ref{prop:U-recap}. Then
	\be
	\| \ell_{\alpha,\gamma}^j - \ell_{\alpha',\gamma'}^j \|_{\mathcal L(\mathcal H)}
		\lesssim \|\alpha-\alpha'\|_{L^\infty_y(H^{7/12}_x)} + \|\gamma_1-\gamma_1'\|_{L^\infty_y(L^2_x)} + \|\gamma_2-\gamma_2'\|_{H^{1/2}_x L^2_y}.
	\ee

\end{lemma}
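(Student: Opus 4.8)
The plan is to rely on the duality representation of the linear forms $\ell^j_{\alpha,\gamma}$ established in the proof of Proposition~\ref{prop:U-recap}. Recall that each $\ell^j_{\alpha,\gamma}$ acts on $(f,\delta_0,\delta_1)\in\cH$ through a pairing of the data against a ``dual profile'' $\Phi^j_{\alpha,\gamma}$, which solves the backward (adjoint) degenerate parabolic problem with prescribed jumps across $\{y=0\}$, in the domain $\Om_\pm$, together with the boundary contributions coming from $\delta_i$ and $\Delta_i$. Concretely, $\ell^j_{\alpha,\gamma}(f,\delta_0,\delta_1)$ is an affine-linear expression in $\int_\Om h \,\Phi^j_{\alpha,\gamma}$ and boundary integrals on $\Sigma_i$, where $h=h_0,h_1$ depend linearly on $f,\delta_i$. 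Since the map $(f,\delta_0,\delta_1)\mapsto(h_0,h_1,\Delta_0,\Delta_1)$ is bounded on $\cH$ with norm controlled uniformly under the smallness assumptions, estimating $\|\ell^j_{\alpha,\gamma}-\ell^j_{\alpha',\gamma'}\|_{\mathcal L(\mathcal H)}$ reduces to estimating $\Phi^j_{\alpha,\gamma}-\Phi^j_{\alpha',\gamma'}$ in an appropriate (dual) norm, plus controlling the difference of the coefficients multiplying the boundary terms.

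The key steps, in order, would be: (i) Write down explicitly the difference $\delta\Phi^j:=\Phi^j_{\alpha,\gamma}-\Phi^j_{\alpha',\gamma'}$ as the solution of the adjoint problem for $(\alpha,\gamma)$ with \emph{zero} jumps, zero boundary data, and a right-hand side given by $(\alpha-\alpha')\p_{yy}\Phi^j_{\alpha',\gamma'}-(\gamma-\gamma')\p_y\Phi^j_{\alpha',\gamma'}$ plus the analogous commutator terms involving $\alpha_x-\alpha'_x$, $\gamma_x-\gamma'_x$ acting on the antiderivative of $\Phi^j_{\alpha',\gamma'}$ (exactly the structure of \eqref{eq:V} read backwards). (ii) Apply the well-posedness/regularity estimate for the adjoint of \eqref{eq:V} — the dual of the $Z^0$-type estimate of Lemma~\ref{lem:Z0-coeffs-variables} together with the higher-regularity bound from Proposition~\ref{prop:U-recap} — to get $\|\delta\Phi^j\|_{(\text{dual of }\cH)}$ bounded by the $L^2$-type norm of that right-hand side. (iii) Bound that right-hand side using Hölder in the $x$-variable at the level of fractional Sobolev spaces: here one uses that $\Phi^j_{\alpha',\gamma'}$ enjoys the regularity dual to $\qone$ near $\{y=0\}$ and is smooth away from the corners, so products like $(\alpha-\alpha')\p_{yy}\Phi^j_{\alpha',\gamma'}$ can be estimated by $\|\alpha-\alpha'\|_{L^\infty_y(H^{7/12}_x)}$ against a fixed (finite) norm of $\p_{yy}\Phi^j_{\alpha',\gamma'}$, and similarly $\|\gamma_1-\gamma_1'\|_{L^\infty_y(L^2_x)}$, $\|\gamma_2-\gamma_2'\|_{H^{1/2}_xL^2_y}$ absorb the transport and commutator differences — the $7/12$ and $1/2$ exponents being precisely those dictated by the anisotropic trace/product estimates in the $(x,y)$ scaling. (iv) Finally, collect the boundary-term differences on $\Sigma_i$, which involve $(\alpha(x_i,\cdot)-\alpha'(x_i,\cdot))$ and $(\gamma(x_i,\cdot)-\gamma'(x_i,\cdot))$ evaluated on the inflow section; these are controlled by the same coefficient norms via the trace theorem $H^1_x\hookrightarrow C_x$ and absorbed into the stated right-hand side.

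The main obstacle I anticipate is Step (iii): obtaining the product estimates with the \emph{sharp} anisotropic exponents ($H^{7/12}_x$ for $\alpha$, $H^{1/2}_x$ for $\gamma_2$) requires knowing precisely how much $x$-regularity the dual profile $\Phi^j$ carries near the singular points $(x_i,0)$ — it is not smooth there, mirroring the non-$H^1_xH^1_y$ behaviour of $\busing^i$ in Lemma~\ref{lem:busing} — so one must carry out the paraproduct/interpolation estimate against the genuinely limited regularity of $\Phi^j$ and check that the loss is exactly compensated by the gain from $\alpha-\alpha'$ or $\gamma-\gamma'$. A secondary subtlety is that $\delta\Phi^j$ itself solves a problem with the \emph{rough} coefficients $(\alpha,\gamma)$, so one is not free to use more than the $Z^0$/$H^1_xH^1_y$ theory already available; one must verify that this level of regularity suffices to pair against all the data appearing in $\cH$. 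Apart from these points, the argument is a routine Lipschitz-dependence-on-coefficients computation, and full details can be deferred to \cite{DMR} as in the proof of Proposition~\ref{prop:U-recap}.
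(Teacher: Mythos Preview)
Your Step~(i) is exactly the paper's approach: subtract the two dual profiles and write the equation satisfied by $\delta\Phi^j := \Phi^j_{\alpha,\gamma} - \Phi^j_{\alpha',\gamma'}$, which is of the form~\eqref{eq:coeff-variables} (with the nonlocal terms from~\eqref{eq:V}) with zero jumps, zero boundary data, and a source involving $\alpha-\alpha'$, $\gamma-\gamma'$ applied to $\Phi^j_{\alpha',\gamma'}$.

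Where you diverge from the paper is in Steps~(ii)--(iii). The paper does not aim for an abstract ``dual of $\cH$'' norm, nor does it invoke the higher-regularity theory of Proposition~\ref{prop:U-recap}, paraproduct estimates, or any singular-profile decomposition of $\Phi^j$. It simply performs a direct $L^2_xH^1_y(\Om_\pm)$ energy estimate on $\delta\Phi^j$ and obtains
\[
\|\delta\Phi^j\|_{L^2_x H^1_y(\Om_\pm)}
\lesssim \|\alpha-\alpha'\|_{L^\infty_y(H^{7/12}_x)} + \|\gamma_1-\gamma_1'\|_{L^\infty_y(L^2_x)} + \|\gamma_2-\gamma_2'\|_{H^{1/2}_x L^2_y},
\]
which already suffices to control the difference of the linear forms on $\cH$. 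The fractional exponents arise when bounding the source terms (notably those carrying $\alpha_x-\alpha'_x$ against the $x$-antiderivative of $\p_{yy}\Phi^j_{\alpha',\gamma'}$), but only the $Z^0(\Om_\pm)$ regularity of $\Phi^j_{\alpha',\gamma'}$ is used --- no fine analysis near the corners is required. In short, your strategy is correct and your anticipated ``main obstacle'' is largely a phantom: since $\delta\Phi^j$ has \emph{no} jump and \emph{no} boundary data, the basic weak-solution energy estimate closes without orthogonality conditions or interpolation, and the paper's argument is shorter than the route you sketch.
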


\begin{proof}[Sketch of proof]
We write the equation satisfied by the dual profiles $\Phi^j$ and $(\Phi^j)'$. Their difference satisfies an equation of the form \eqref{eq:coeff-variables} (with an additional nonlocal transport term and variable coefficients), and with a source term involving $\alpha-\alpha'$, $\gamma-\gamma'$. Performing energy estimates, we prove that
\[
	\|\Phi^j-(\Phi^j)'\|_{L^2_x H^1_y(\Om_\pm)} 
	\lesssim \|\alpha-\alpha'\|_{L^\infty_y(H^{7/12}_x)} + \|\gamma_1-\gamma_1'\|_{L^\infty_y(L^2_x)} + \|\gamma_2-\gamma_2'\|_{H^{1/2}_x L^2_y}.
\]
The result follows.
\end{proof}

\section{The nonlinear scheme}

We now turn towards the proof of Theorem \ref{theorem:main}. 
Uniqueness follows easily from simple energy estimates and is left to the reader. Therefore we focus on the existence part of the Theorem.

We construct an iterative sequence $(u_n)_{n\in \N}$ in the following way.

Let $\chi \in C^\infty(\mathbb{R},[0,1])$, identically equal to one on $[-\frac 13, \frac 13]$ and compactly supported in $[-\frac 12, \frac 12]$. We define the initialization profile of our iterative scheme as
\begin{equation} \label{eq:u0}
	u_0(x,y) := \delta_0(y) \chi \left(\frac{x - x_0}{x_1-x_0}\right) + \delta_1(y) \chi \left(\frac{x_1 - x}{x_1-x_0}\right).
\end{equation}
Then, for any $n\geq 0$, we define $u_{n+1}$ as the solution of 
\be
\label{eq:NL-un-un1}
\begin{cases}
	(y + u_n) \p_x u_{n+1} - \p_{yy} u_{n+1} = f+ \nu^0_{n+1} f^0 + \nu^1_{n+1} f^1=:f_{n+1}, \\
	(u_{n+1})_{\rvert \Sigma_i} = \delta_i + \nu^0_{n+1} \delta^0_i + \nu^1_{n+1} \delta^1_i=:\delta_{i,n+1}, \\
	(u_{n+1})_{\rvert y = \pm 1} = 0,
\end{cases}
\ee
In the above equation  $(f^j, \delta^j_0, \delta^j_1)$ are fixed triplets (independent of $n$) such that
\be\label{def:Xi-j}
\overline{\ell^i}(f^j, \delta^j_0, \delta^j_1)=\delta_{i,j}.
\ee
The coefficients $\nu^0_{n+1}$ and $\nu^1_{n+1}$ are designed to ensure that the orthogonality conditions are satisfied at every step.

The main steps of the analysis are the following:
\begin{itemize}
\item First, we perform a change of variables in order to transform \eqref{eq:NL-un-un1} into an equation of the form \eqref{eq:coeff-variables} with coefficients $\alpha_n,\gamma_n$ depending on $u_n$. This will allow us to define rigorously the coefficients $\nu^j_n$, and therefore the sequence $(u_n)_{n\in \N}$.

\item Then, we derive uniform estimates in $\qone$ on the sequence $(u_n)_{n\in \N}$. This step is fairly easy and is a straightforward consequence of the regularity analysis of solutions of equation \eqref{eq:coeff-variables}.

\item Eventually, we prove that $(u_n)_{n\in \N}$ is a Cauchy sequence in a suitable functional space. This is the most involved step of the convergence proof.
Indeed, the sequence $(u_n)_{n\in \N}$ is not sufficiently smooth in order to obtain a Cauchy bound in $\qone$. Hence we must work in a space with lower regularity.
However, the Lipschitz estimate on the linear forms $\ell^j_{\alpha_n,\gamma_n}$ from Lemma \ref{lem:regularity-phi-j} involves a regularity which is strictly bigger than $\qnot$.
This prompts us to work in a fractional Sobolev space. 
\end{itemize}

\subsection{Change of variables and uniform \texorpdfstring{$\qone$}{Q1} estimate}
Let $n\in \N$, and assume that $u_n\in \qone$ is such that $\|u_n\|_{\qone}\ll 1$.
Define $Y_n=Y_n(x,z)$ by the implicit formula
\[
Y_n(x,z) + u_n(x, Y_n(x,z))=z\quad \forall (x,z)\in \overline{\Om}.
\]
We then look for $u_{n+1}$ under the form
\begin{equation}
	u_{n+1}(x,y) = U_{n+1}(x, y+ u_n(x,y)),
\end{equation}
so that $U_{n+1}=U_{n+1}(x,z)$ solves
\begin{equation} \label{eq:U}
	\begin{cases}
		z \partial_x U_{n+1} + \gamma_n \p_z U_{n+1} - \alpha_n \partial_{zz} U_{n+1} = g_{n+1} 
		& \text{in } \Omega, \\
		U_{n+1\rvert \Sigma_i} = \widetilde{\delta}_{i,n+1}, \\
		U_{n+1\rvert y = \pm 1} = 0,
	\end{cases}
\end{equation}
where
\begin{align}
	\label{eq:alpha}
	\alpha_n(x,z) & := (1 + \p_y u_n)^2(x,Y_n(x,z)), \\
	\label{eq:gamma}
	\gamma_n(x,z) & := (z \p_x u_n- \p_{yy} u_n)(x,Y_n(x,z)) \\
	\label{eq:g}
	g_{n+1}(x,z) & := f_{n+1}(x, Y_n(x,z)), \\
	\label{def:tilde-delta}
	\widetilde{\delta}_{i,n+1}(z) & := \delta_{i,n+1} (Y_n(x_i,z)).
\end{align}
It follows from Lemma \ref{lem:Z0-coeffs-variables} that equation \eqref{eq:U} has a unique solution for every couple $(\nu^0_{n+1}, \nu^1_{n+1})\in \R^2$. 
Furthermore, it can be checked that $\alpha_n, \gamma_n$ satisfy the assumptions of \eqref{prop:U-recap}.
With a slight abuse of notation, let us denote by $\ell^j_n$ the linear form $\ell^j_{\alpha_n,\gamma_n}$. 
We then choose the coefficients $(\nu^0_{n+1}, \nu^1_{n+1})\in \R^2$ so that
\[
\ell^j_n (g_{n+1}, \widetilde{\delta}_{0,n+1},\widetilde{\delta}_{1,n+1})=0\quad \text{for }j=0,1.
\]
Using Lemma \ref{lem:regularity-phi-j}, we prove that the matrix 
\[
M_n:=\left( \ell^j_n ( f^i(x, Y_n), \delta^i_0(Y_n(x_0)), \delta^i_1(Y_n(x_1)))\right)_{0\leq i, j \leq 1}
\]
is close to identity, and thus invertible. The coefficients $(\nu^0_{n+1}, \nu^1_{n+1})$ are therefore uniquely defined.

Furthermore, using Proposition \ref{prop:U-recap} and deriving additional regularity estimates in the spirit of Lemma \ref{prop:dy-5-shearflow}, we prove by induction that
\[
\| u_n\|_{\qone}  + | \nu^0_n | + |\nu^1_n| \lesssim \|(f,\delta_0, \delta_1)\|_{\cH} \quad \forall n\in \N.
\]

\subsection{Geometric estimate in \texorpdfstring{$\qhalf$}{Q12}}

The next step is to prove a bound of the type
\be\label{in-geom}
\|u_{n+1}- u_n \|_X \lesssim \eta \| u_n - u_{n-1}\|_X
\ee
for some $0<\eta \ll 1$, in a functional space $X$ to be determined. To that end, we observe that $w_n:=u_{n+1}- u_n$ satisfies
\begin{equation} \label{eq:wn}
	\begin{cases}
		(y + u_n) \partial_x w_n - \partial_{yy} w_n = - w_{n-1} \partial_x u_n + (\nu^0_{n+1}-\nu^0_{n}) f^0 + (\nu^1_{n+1}-\nu^1_{n}) f^1, \\
		(w_n)_{\rvert \Sigma_i} = (\nu^0_{n+1} - \nu^0_n) \delta^0_i + (\nu^1_{n+1} - \nu^1_n) \delta^1_i, \\ 
		(w_n)_{\rvert y = \pm 1} = 0.
	\end{cases}
\end{equation}
We already know that the solution $w_n$ belongs to $\qone$, as the difference between two $\qone$ functions.
Hence, there is no need to check that the orthogonality conditions are satisfied.
However, because of the term $-w_{n-1} \partial_x u_n$ in the right-hand side, the source term does not belong to $H^1_x L^2_y$. 
Note that the right-hand side also involves $\nu^j_{n+1} - \nu^j_n$. From the definition of $\nu^j_n$ and Lemma \ref{lem:regularity-phi-j}, we have
\[
\ba 
|\nu^j_{n+1} - \nu^j_n| \lesssim &\| \ell^0_{n}- \ell^0_{n-1}\|_{\mathcal L(\cH)} +  \| \ell^1_{n}- \ell^1_{n-1}\|_{\mathcal L(\cH)} \\
\lesssim & \|\alpha_n-\alpha_{n-1}\|_{L^\infty_z(H^{7/12}_x)} + \|\gamma_{1,n}-\gamma_{1, n-1} \|_{L^\infty_z(L^2_x)} \\&+ \|\gamma_{2, n} -\gamma_{2, n-1}\|_{H^{1/2}_x L^2_z}.
\ea 
\]
Looking at the definition of the coefficients $\alpha_n, \gamma_n$, one can prove that the right-hand side of the above inequality is bounded by $\| u_n - u_{n-1}\|_{\qhalf}$. This prompts us to take 
\[
X=Q^{1/2}:= H^{7/6}_x L^2_y \cap L^2_x H^{7/2}_y(\Om).
\]
in \eqref{in-geom}.

As a consequence, we need to derive estimates in $\qhalf$ for the solutions of \eqref{eq:coeff-variables}, when the data satisfies an orthogonality condition.
We choose to argue by interpolation. This leads to a subtle matter:  omitting the boundary data to simplify the discussion, one needs to interpolate between $L^2(\Om)$ (which is the regularity of the source term for $\qnot$ solutions) and the space $\{f\in H^1_x L^2_z \cap L^2_x H^3_z\ \ell^j_{\alpha,\gamma} (f,0,0)=0\}$.
This requires a rather precise description of the linear forms $\ell^j$, involving in particular their representation in terms of the dual profiles $\Phi^j$, and a decomposition of the latter into singular profiles and a smooth and/or explicit remainder, in the spirit of Corollary \ref{coro:decomposition-profile}. We refer to \cite{DMR} for all details. 
Eventually, we conclude that
\[
\|w_n \|_{\qhalf} \lesssim \eta \|w_{n-1}\|_\qhalf + \eta^2 \|w_{n-2}\|_\qhalf,
\]
where $\eta= \|(f,\delta_0, \delta_1)\|_{\cH} $.

As a consequence, $(u_n)_{n\in \N}$ is a Cauchy sequence in $\qhalf$. Passing to the limit as $n\to \infty$, we infer that there exists $u\in \qone$, $\|u\|_{\qone}\lesssim \eta$, and $(\nu^0, \nu^1)\in \R^2$, $| \nu^j|\lesssim \eta$, such that
\[
\begin{cases}
(y+u)\p_x u - \p_{yy} u =f + \nu^0 f^0 + \nu^1 f^1,\\
	u_{\rvert \Sigma_i} = \delta_i + \nu^0 \delta^0_i + \nu^1 \delta^1_i, \\
u_{\rvert y = \pm 1} = 0.
\end{cases}
\]
Let $B_\eta$ be the open ball of radius $\eta$ in $\cH$. Note that it follows from the above construction that the maps
\[
(f,\delta_0, \delta_1)\in B_\eta\mapsto \nu^j (f,\delta_0, \delta_1)\in \R
\]
are Lipschitz continuous. We also denote by $\mathcal{U}_\perp$ the map $\mathcal{U}_\perp:(f,\delta_0, \delta_1)\in B_\eta\mapsto u\in \qone$. The above argument shows that $\mathcal{U}_\perp$ is Lipschitz continuous from $B_\eta$ to $\qhalf$.

\subsection{Definition of the manifold \texorpdfstring{$\mathcal M$}{M}}

For the sake of brevity, let us denote by $\Xi=(f,\delta_0,\delta_1)$ the elements of $\cH$. We set  $\Xi^j= (f^j, \delta_0^j, \delta_1^j)$ (see \eqref{def:Xi-j}), and $\hperp:=\{\Xi\in \cH, \ \langle \Xi, \Xi^j\rangle_{\cH} =0\ j=0,1 \}$.
For every $\Xi  \in \mathcal{H}$, one has the decomposition
\begin{equation}
	\Xi = \Xi^\perp + \langle \Xi^0 ; \Xi \rangle_{\mathcal{H}} \Xi^0 + \langle \Xi^1 ; \Xi \rangle_{\mathcal{H}} \Xi^1,
\end{equation}
where $\Xi^\perp \in \hperp$ and the linear maps $\Xi \mapsto \Xi^\perp$ and $\Xi \mapsto \langle \Xi^k ; \Xi \rangle$ are continuous.

Let us now define
 \begin{equation} \label{eq:M}
	\mathcal{M} := \left\{ 
	\Xi \in \mathcal{H}; \enskip
	\| \Xi \|_{\mathcal{H}} < \eta 
	\text{ and }
	\langle \Xi^k ; \Xi \rangle_{\mathcal{H}} = \nu^k(\Xi^\perp)
	\text{ for } k = 0,1 
	\right\}
\end{equation}
where $\nu^0$ and $\nu^1$ are the maps constructed in the previous paragraph.
We set, for $\Xi \in \mathcal{M}$,
\begin{equation}
	\mathcal{U}(\Xi) := \mathcal{U}_\perp(\Xi^\perp).
\end{equation}
It can be easily checked that for all $\Xi\in \mathcal{M}$, $\mathcal{U}(\Xi)$ is a solution of \eqref{eq:eq0}. It is also clear from the definition that $\mathcal M$ is a Lipschitz manifold modeled on $\hperp$ since $\nu^0$ and $\nu^1$ are Lipschitz maps. It contains $0$ since $\nu^j(0)=0$. Furthermore, $\hperp$ is  tangent to $\mathcal{M}$ at $0$ in the following weak senses:
\begin{itemize}
	\item For $\Xi \in \mathcal{M}$, $\operatorname{d} ( \Xi, \hperp ) \lesssim \| \Xi \|^2_{\mathcal{H}}$.
	\item For every $\Xi^\bot \in \hperp$, for $\eps \in \R$ small enough, $\operatorname{d} (\eps \Xi^\bot, \mathcal{M}) \lesssim \eps^2$.
\end{itemize}
These two properties are left to the reader. Theorem \ref{theorem:main} follows.

\section*{Acknowledgements}

This project has received funding from the European Research Council (ERC) under the European Union's Horizon 2020 research and innovation program Grant agreement No 637653, project BLOC ``Mathematical Study of Boundary Layers in Oceanic Motion''. This work was supported by the SingFlows project, grants ANR-18-CE40-0027  of the French National Research Agency (ANR).
A.-L.\ D.\ acknowledges the support of the Institut Universitaire de France.
This material is based upon work supported by the National Science Foundation under Grant No. DMS-1928930 while A.-L.\ D.\ participated in a program hosted by the Mathematical Sciences Research Institute in Berkeley, California, during the Spring 2021 semester.

	%
	%
\end{document}